\journal{Journal of Number Theory}
\newtheorem{theorem}{Theorem}[section]
\newtheorem{proposition}[theorem]{Proposition}
\theoremstyle {definition}
\newtheorem{example}[theorem]{Example}
\newtheorem{remark}[theorem]{Remark}
\numberwithin{equation}{section}
\newcommand{\Z}{\mathbb Z}
\newcommand{\Q}{\mathbb Q}
\newcommand{\N}{\mathbb N}
\renewcommand{\:}{\colon}
\renewcommand{\>}{\rightarrow}
\title{Groups of automorphisms of $p$-adic integers and the problem of the existence of fully homomorphic ciphers}
\author{Ekaterina Yurova Axelsson} 
\ead{Ekaterina.Yurova@lnu.se}
\address{International Center for Mathematical Modeling in Physics, Engineering, 
Economics, and Cognitive Science,  
Linnaeus University, Sweden}
\author{Andrei Khrennikov}
\ead{Andrei.Khrennikov@lnu.se}
\address{International Center for Mathematical Modeling in Physics, Engineering, 
Economics, and Cognitive Science,  
Linnaeus University, Sweden}
\begin{document}

\begin{abstract}

In this paper, we study groups of automorphisms  of algebraic systems over a set of $p$-adic integers with different sets of arithmetic and coordinate-wise logical operations and congruence relations modulo $p^k,$ $k\ge 1.$ The main result of this paper is the description of groups of automorphisms of $p$-adic integers with one or two arithmetic or coordinate-wise logical operations on $p$-adic integers. To describe groups of automorphisms, we use the apparatus of the $p$-adic analysis and $p$-adic dynamical systems.

The motive for the study of  groups of automorphism of algebraic systems over $p$-adic integers is the question of the existence of a fully homomorphic encryption in a given family of ciphers. The relationship between these problems is based on the possibility of constructing a "continuous" $p$-adic model for some families of ciphers (in this context, these ciphers can be considered as "discrete" systems). As a consequence, we can apply the "continuous" methods of $p$-adic analysis to solve the "discrete" problem of the existence of fully homomorphic ciphers.

\end{abstract}


\begin{keyword}
 $p$-adic numbers \sep dynamical systems \sep groups of automorphisms \sep fully  homomorphic ciphers
\end{keyword}

\maketitle

\section{Introduction}
In this paper, we study groups of automorphisms of $p$-adic integers $\Z_p.$ We consider the set $\Z_p$ as an algebraic system with a given set of binary operations and relations (or predicates). We recall that the algebraic system is a triple $\mathcal{A}=\langle A, \Omega_{\mathcal A}, P_{\mathcal A}\rangle$, where $A$ is a set (i.e., a carrier of system $\mathcal{A}$), $\Omega_{\mathcal A}$ is a set of operations (in our case binary) on $A$  (i.e., an operator domain), and $P_{\mathcal A}$ is a set of relations (in our case binary) on $A$ (i.e., a predicate domain), see, for example, \cite {Coh} and \cite {Maltcev}. A predicate on $A$ (in our case binary) is a mapping $\pi: A\times A\to \{True, False\}.$ We denote a predicate as $\pi(x,y)$ instead of $\pi(x,y)=True$. In fact, the predicate is the characteristic function of some subset of $A\times A,$ i.e. relations on $A.$ Therefore, the concepts of relation and predicate are treated as synonyms.

An automorphism of an algebraic system $\mathcal{A}$ is a bijective mapping $\phi: A\to A$ such that $\phi(x\star y)=\phi(x)\star \phi(y)$, $x, y\in A$ for any operation $\star\in \Omega_{\mathcal A}.$ Moreover, if $\pi(x,y)$, then $\pi(\phi(x),\phi(y))$ for any predicate $\pi\in P_{\mathcal A}$, $x, y\in A$ (in other words, $\phi$ preserves all the operations and predicates (or relations)).

For $p$-adic integers, we consider the algebraic system of the following form $\mathcal{A}=\langle A, \Omega_{\mathcal A}, P_{\mathcal A}\rangle,$ where $A=\Z_p,$ predicate domain $P_{\mathcal A}$ is determined by the congruence relations  modulo $p^k,$ $k\ge 1,$ and operator domain $\Omega_{\mathcal A}$ consists of one or two operations from the set $O_{\Z_p}=\{+, \cdot, \mathrm{XOR}, \mathrm{AND}\}.$  Operations "$+$" and "$\cdot$" are arithmetic operations on $\Z_p.$  Coordinate-wise logical operations "$\mathrm{XOR}$" and "$\mathrm{AND}$" are also given on $\Z_p.$ Their meaning is to implement the logical operations of addition and multiplication on the set $\{0,\ldots, p-1\}$ for each coordinate of the canonical representation of a $p$-adic integer (for more details, see Section \ref {Automorph_def}). To denote the algebraic systems under consideration, we shall use the notation $\mathcal{A}_p(*)$ for one operation and $\mathcal{A}_p(*_1,*_2)$ for two operations, where $*,*_1,*_2\in O_{\Z_p}.$

The main results are presented in Section \ref{sec_automorph}. In Theorems \ref{t_gom_ariff} and \ref{t_gom_coord}, we give a description of groups of automorphisms of algebraic systems of $p$-adic integers $\mathcal{A}_p(*),$ where $*\in O_{\Z_p}.$ Here "$*$" is one of the arithmetic ("$+$" and "$\cdot$") or coordinate-wise logical ("$\mathrm{XOR}$" and "$\mathrm{AND}$") operations. 

These results were obtained on the basis of the apparatus developed in our previous works on $p$-adic (and, especially, measure-preserving) dynamical systems \cite{MeraJNT}, \cite{SOL},  see also  pioneering papers of V. Anashin \cite{Tfunc}-\cite{An_avt_1} and monograph \cite{ANKH}. See also works  \cite{V00}, \cite{V1} on the general theory of $p$-adic dynamical system and more generally interrelation between number theory and dynamical systems. In particular, in terms of $p$-adic dynamics, an automorphism $\mathcal{A}_p(*)$ is a 1-Lipschitz measure-preserving function  $\Z_p\to \Z_p,$ that is a homomorphism with respect to a given operation "*". Here the condition "1-Lipschitz" corresponds to the preservation of the predicates $\mathcal{P}$ that define the congruence relations modulo $p^k,$ $k\geq 1$ and the condition "preserves the measure" corresponds to the bijectivity (reversibility) of the function whereby the automorphism is determined.

In Theorem \ref {Prop_aut_2}, we consider the case where any two operations from a set of arithmetic and coordinate-wise logical operations are defined on $\Z_p.$
It turned out that all groups of automorphisms  of algebraic system of $p$-adic integers $\mathcal{A}_p(*_1, *_2)$ for $*_1, *_2\in O_{\Z_p}$ are trivial. Due to the result of Theorem \ref {Prop_aut_2}, there arises the question of the existence of an algebraic system of $p$-adic integers $\mathcal{A}_p(g_1, g_2),$ where $g_1$ and $g_2$ are "new" operations for which the group of automorphisms differs from the trivial group. In Proposition \ref {new_operation}, we describe all the operations "$G$" (here $G: \Z_p\times \Z_p \to \Z_p$) on $\Z_p$ for which the groups of automorphisms  of the algebraic systems $\mathcal{A}_p(+, G)$ are not trivial (here operations "$G$" are given by a  convergent series on $\Z_p$).

We also consider the case where "new" operations are given as formulas in a basis of two arbitrary arithmetic and coordinate-wise logical operations over $\Z_p.$
In this case, the necessary condition for the non-triviality of the group of automorphisms  $\mathcal{A}_p(g_1, g_2)$ is that the set of formulas in the basis of the operations $g_1, g_2$ does not coincide with the set of formulas in the chosen basis of arithmetic or coordinate-wise logical operations over $\Z_p$ (see Proposition \ref{formuls}).

Our main reason to consider such groups of automorphisms of $p$-adic integers is the possibility of using the apparatus of $p$-adic analysis to introduce the transformations on $\Z_p,$ which can be used to construct fully homomorphic ciphers. Recall that a ciher is a family $f_r,$ $r\in R$ of bijective mappings of a set of open texts $X$ into a set of ciphered texts $Y$, where the parameter $r$ is a key. Note that in the general case $f_r$ only required property of injectivity, but usually, it is considered bijective transformation. We consider ciphers for which the sets $X$ and $Y$ coincide and consist of words of finite length in the alphabet $\mathrm B=\{0, 1,\ldots, p-1\}$ for prime number $p.$ In this case, if one operation (or two operations) on $X=Y$ is given and for any $r\in R$ the transformation $f_r$ is a homomorphism with respect to this operation (respectively, to these operations), then it is said that the cipher is homomorphic (respectively, fully homomorphic). The problem of constructing a fully homomorphic encryption is relevant for the secure cloud computing (for more details, see \ref {sec_model}).

It turns out that algebraic systems of $p$-adic integers $\mathcal{A}_p(*),$ $\mathcal{A}_p(*_1,*_2)$ for $*, *_1, *_2\in O_{\Z_p}$ are "continuous" $p$-adic models of the ciphers under consideration with operations that are discrete analogs of operations in $O_{\Z_p}.$ The description of ciphers for which there exist "continuous" $p$-adic models, as well as the rationale for the choice of such models, are presented in Section \ref {sec_model}. If there is a description of automorphism groups of $p$-adic integers $\mathcal{A}_p(*)$, $\mathcal{A}_p(*_1,*_2)$ in the framework of a "continuous" $p$-adic model, then, choosing the corresponding "discrete" analogues of these automorphisms, we can construct homomorphic (fully homomorphic) ciphers from the family of ciphers under consideration.

We recall some definitions related to the $p$-adic analysis and we introduce the necessary notations.


\subsection{$P$-adic numbers}
\label {p-adic}
For any prime number $p$ the $p$-adic norm $|\cdot|_p$ is defined on $\Q$ in the following way. For every nonzero integer $n$ let $ord_p(n)$ be the highest power of $p$ which divides $n$. Then we define $|n|_p=p^{-ord_p (n)}$, $|0|_p=0$ and $|\frac {n}{m}|_p=p^{-ord_p(n)+ord_p(m)}$.

The completion of $\Q$ with respect to the $p$-adic metric $\rho_p(x,y)=|x-y|_p$  is called the field of $p$-adic numbers $\Q_p$.
The metric $\rho_p$ satisfies the so-called strong triangle inequality $|x\pm y|_p\le \max{(|x|_p;|y|_p)}$. 
The set $\Z_p=\{x\in\Q_p\colon |x|_p\leq 1\}$ is called the set of $p$-adic integers. 

Hereinafter, we will consider only the $p$-adic integers.
Every $x\in \Z_p$ can be expanded in canonical form, namely, in the form of a series that converges for the $p$-adic norm:
$x=x_0+px_1+\ldots+p^kx_k+\ldots,\quad x_k\in\{0,1,\ldots,p-1\},$ $k\ge 0.$

Partial sums of this series, we denote as $[x]_k,$ i.e. $[x]_k=x_0+px_1+\ldots+p^{k-1}x_{k-1},$ $k\ge 1.$

If residues of the ring $\Z/p^k\Z$ are set as minimal non-negative integers, then for $x\in\Z_p$ we can consider notation $x \pmod {p^k}$ in the sense of
\begin{equation}
\label {_x}
    x \pmod {p^k}=[x]_k\;\; \text {or} \;\;\; x\equiv[x]_k\pmod {p^k}.
\end{equation}
Let $a\in \Z_p$ and $r$ be positive integers. The set $B_{p^{-r}}(a)=\{x\in\Z_p : |x-a|_p\le p^{-r}\}=a+p^r\Z_p$ is a ball of radius  $p^{-r}$ with a center $a$.

\subsection {$P$-adic functions}
\label {subsec_Lip}

In this paper, we consider functions $f: \Z_p\> \Z_p$, which satisfy the Lipschitz condition with a constant 1 (i.e., 1-Lipschitz functions).
Recall that 
$f: \Z_p\> \Z_p$ is a 1-Lipschitz function if $|f(x)-f(y)|_p\le |x-y|_p,$ for all $x,y\in\Z_p$.
This condition is equivalent to the following: $x\equiv y \pmod {p^k}$ implies $f(x)\equiv f(y) \pmod {p^{\;k}}$ for all $k\ge 1$.

For all $k\ge 1$ a 1-Lipschitz transformation $f: \Z_p \rightarrow \Z_p$ of the reduced mapping modulo $p^k$ is  
\begin{equation}
\label {f_mod_p_k}
    f_{k-1}: \Z/p^k\Z \rightarrow Z/p^k\Z,\;\;z\mapsto f(z)\pmod {p^k}.
\end{equation}
A mapping $f_{k-1}$ is well defined (i.e. the $f_{k-1}$ does not depend on the choice of representative in the ball $z + p^k\Z_p$). 
We use the notation $f_{k-1}\equiv f\pmod {p^k}$ taking into account (\ref{_x}).

\subsubsection{Van der Put series}
Continuous $p$-adic functions can be represented in the form of the van der Put series.
The van der Put series is defined in the following way. Let $f\:\Z_p\>\Z_p$ be a continuous function. Then there exists a unique sequence of $p$-adic coefficients
$B_0,B_1,B_2, \ldots$ such that 
\begin{equation}
\label{vdp} 
f(x)=\sum_{m=0}^{\infty} B_m \chi(m,x)     
\end{equation}
for all $x \in \Z_p.$
Here the characteristic function $\chi(m,x)$ is given by $\chi(m,x)=1$ if $|x-m|_p\le p^{-n}$ and $\chi(m,x)=0$ otherwise, where $n=0$ if $m=0$, and $n$ is uniquely defined by the inequality 
$p^{n-1}\leq m \leq p^n-1$ otherwise (see Schikhof's book \cite{Schikhof} for a detailed presentation of the theory of the van der Put series). 

The coefficients $B_m$ are related to the values of the function $f$ in the following way. Let $m=m_0+ \ldots +m_{n-2} p^{n-2}+m_{n-1} p^{n-1}$, 
$m_j\in \{0,\ldots ,p-1\}$, $j=0,1,\ldots, n-1$ and $m_{n-1}\neq 0,$ then $B_m=f(m)-f(m-m_{n-1} p^{n-1})$ if $m\ge p$ and $B_m=f(m)$ otherwise.

1-Lipschitz functions $f\: \Z_p \to \Z_p$ in terms of the van der Put series were described in \cite {Schikhof}. We follow Theorem 3.1 \cite {Tfunc} as a convenience for further study. In this theorem, the function $f$ presented via the van der Put series is 1-Lipschitz if and only if $|B_m|_p \leq p^{-\left\lfloor \log_p m \right\rfloor}$ for all $m \ge 0.$ Assuming $B_m=p^{\left\lfloor \log_p m\right\rfloor} b_m$, we find that the function $f$ is 1-Lipschitz if and only if it can be represented as
\begin{equation}
\label {1-Lip}
    f(x)=\sum_{m=0}^\infty p^{\left\lfloor \log_p m\right\rfloor} b_m \chi (m,x)
\end{equation}
for suitable $b_m \in \Z_p,$ $m \ge 0.$

\subsubsection {Coordinate representation of 1-Lipschitz functions}
In this section we describe a coordinate representation of $p$-adic functions, see, for example, \cite{YuRecent}.

Let functions $\delta_k (x),$ $k=0,1,2,\ldots$ be the $k$-th digit in a $p$-base expansion of the number $x\in \Z_p,$ i.e. $\delta_k \: \Z_p\rightarrow \left\{0, 1, \ldots, p-1\right\},$ $\delta_k(x)=x_k.$

Any map $f: \Z_p \to \Z_p$ can be represented in the form:
\begin{equation}
\label{coor1} 
f(x)=\delta_0 (f(x))+ p\delta_1 (f(x))+ \ldots+ p^k \delta_k (f(x))+\ldots.    
\end{equation}

According to Proposition 3.33 in \cite {ANKH}, $f$ is a 1-Lipschitz function if and only if
for every $k \ge 1$ the k-th coordinate function $\delta_k (f (x))$ does not
depend on $\delta_{k+s}(x)$ for all $s \ge 1$, i.e. $\delta_k(f(x+p^{k+1}\Z_p))=\delta_k(f(x))$ for all $x\in \{0, 1, \ldots, p^{k+1}-1\}$.

Taking into account notation (\ref{_x}) for $k\ge 0$, we consider the following functions of $p$-valued logic 
$$
\varphi_k\: \underbrace{\{0,\ldots,p-1\}\times\ldots\times\{0,\ldots,p-1\}}_{k+1} \> \{0,\ldots,p-1\},
$$ 
and $\varphi_k: (x_0, x_1, \ldots, x_k) \mapsto \delta_k(f(x)).$ 

Then any 1-Lipschitz function $f\: \Z_p \> \Z_p$ can be represented as

\begin{equation}
\label{coor2}   
 f(x)=f(x_0+\ldots+p^kx_k+\ldots)=\sum_{k=0}^{\infty}p^k\varphi_k(x_0,\ldots,x_k).
\end{equation}

The function $\varphi_k(x_0, \ldots, x_k)$ can be defined by its sub-functions obtained by fixing the first $k$ variables   $(x_0, \ldots, x_{k-1})$.
Sub-function of the function  $\varphi_k(x_0, \ldots, x_k)$ which is obtained by fixing the variables $x_0=a_0, \ldots, x_{k-1}=a_{k-1},$ $a_i\in\{0, \ldots, p-1\},$ is denoted by $\varphi_{k,a}$, where $a=a_0+pa_1+\ldots+p^{k-1}a_{k-1}$. 

Thus, we can rewrite the 1-Lipschitz function $f\: \Z_p \> \Z_p$ as
\begin{multline}
\label{coor3} 
    f(x)=f(x_0+px_1+\ldots+p^kx_k+\ldots)=\\
=\varphi_0(x_0)+\sum_{k=1}^{\infty} p^k\sum_{a=0}^{p^k-1}I_{a}([x]_k)\varphi_{k,a}(x_k),
\end{multline}
where $I_{a}([x]_k)=1$, if $[x]_k=a$ and $I_{a}([x]_k)=0$ otherwise.

We call the relation (\ref {coor3}) the sub-coordinate representation of a 1-Lipschitz function $f,$ see \cite {Subcoord_Hensel} and \cite {Subcoord_Hensel_1}. Functions $\varphi_{k,a}$, $\varphi_0$ can be considered as a function of $p$-valued logic and as a transformation of the ring $\Z/p\Z$. 

\begin {remark}
\label {rem-1}
If residues of the ring $\Z/p\Z$ are set as minimal non-negative integers, then operations in the ring $\Z/p\Z$ can be regarded as operations on the set $\{0, 1, \ldots, p-1\}.$
In this article, it will be convenient to use a special notation for such operations. Namely, we denote these operations  on the set $\{0, 1, \ldots, p-1\}$ by "$\oplus_p$" and "$\odot_p$" given as $x\oplus_py\equiv x+y \pmod p$ and $x\odot_py=x\cdot y \pmod p,$ correspondingly. 
\end {remark}


\subsection{$P$-adic dynamics}

Dynamical system theory studies trajectories (orbits), i.e.
sequences of  iterations of the function $f$:
$x_0, x_1=f(x_0), \ldots, x_{i+1}=f(x_i)=f^{(i+1)}(x_0), \ldots,$
where $f^{(s)}(x)=\underbrace{f(f(\ldots f(x))\ldots)}_{s}$.

We consider a $p$-adic autonomous dynamical system $\left\langle \Z_p,\mu_p ,f \right\rangle,$ for more details see, for example, \cite{Tfunc}-\cite{V1}, as well as \cite{Jeong}. 
The space $\Z_p$ is equipped with a natural probability measure $\mu_p$, namely,
the Haar measure ($\mu_p(B_{p^{-r}} (a))= p^{-r}$).

A measurable mapping $f\:\Z_p\>\Z_p$ is called measure-preserving if \\ $\mu_p(f^{-1}(U)) = \mu_p(U)$ for each measurable subset $U \subset \Z_p.$ 

Criteria of measure-preserving for 1-Lipschitz functions are presented in the following theorems.

\begin{theorem} (\cite {Unif0},  \cite {ANKH})
\label {cr_mer_Anashin}
A 1-Lipschitz functions $f:\Z_p \to \Z_p$ preserves the measure if and only if $f_{k-1}\equiv f\pmod{p^k}$ is bijective on $\Z/p^k\Z$ for any $k=1, 2, \ldots.$
\end{theorem}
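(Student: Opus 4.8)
The plan is to exploit the fact that on $\Z_p$ the balls of radius $p^{-k}$ are exactly the cosets $a+p^k\Z_p$ for $a\in\{0,\ldots,p^k-1\}$, each carrying measure $\mu_p(B_{p^{-k}}(a))=p^{-k}$, and that this countable collection of balls generates the Borel $\sigma$-algebra. Hence verifying measure preservation reduces to controlling the preimages of balls, and the $1$-Lipschitz hypothesis will give precise combinatorial control of these preimages through the reduced maps $f_{k-1}$.

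First I would record the key geometric consequence of the $1$-Lipschitz condition. If $f(x)\in B_{p^{-k}}(b)$ and $y\equiv x\pmod{p^k}$, then $|f(x)-f(y)|_p\le|x-y|_p\le p^{-k}$ forces $f(y)\in B_{p^{-k}}(b)$ as well. Thus the entire coset $x+p^k\Z_p$ lies in the preimage, so $f^{-1}(B_{p^{-k}}(b))$ is always a disjoint union of balls of radius $p^{-k}$, and the number of these balls is exactly the cardinality of the fiber of $f_{k-1}$ over $b$. Consequently
\[
\mu_p\bigl(f^{-1}(B_{p^{-k}}(b))\bigr)=p^{-k}\,\#\{a\in\Z/p^k\Z : f_{k-1}(a)=b\}.
\]

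For the direction ``bijectivity $\Rightarrow$ measure preservation'', I would assume every $f_{k-1}$ is bijective. Then each fiber above is a singleton, so the displayed identity yields $\mu_p(f^{-1}(B_{p^{-k}}(b)))=p^{-k}=\mu_p(B_{p^{-k}}(b))$ for every ball. Since the family of all balls is a $\pi$-system generating the Borel $\sigma$-algebra and $\mu_p$ is a probability measure, the standard uniqueness-of-extension argument (monotone class or $\pi$--$\lambda$ theorem) upgrades agreement on balls to the identity $\mu_p\circ f^{-1}=\mu_p$ on all measurable sets, which is precisely measure preservation. For the converse I would argue by contraposition: if some $f_{k-1}$ fails to be bijective then, $\Z/p^k\Z$ being finite, it fails to be surjective, so there is a residue $b$ omitted by its image; for this $b$ the fiber is empty, whence $\mu_p(f^{-1}(B_{p^{-k}}(b)))=0\neq p^{-k}=\mu_p(B_{p^{-k}}(b))$, and $f$ cannot preserve $\mu_p$.

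The geometric and combinatorial pieces are genuinely routine once the $1$-Lipschitz property is used to show that preimages of balls are unions of balls. The one step requiring real care, and hence the main obstacle, is the passage from ``$\mu_p\circ f^{-1}$ and $\mu_p$ agree on all balls'' to ``they agree as measures''; this is where the measure-theoretic uniqueness via a generating $\pi$-system must be invoked explicitly rather than merely asserted.
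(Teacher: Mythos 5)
Your proof is correct, but note that the paper itself gives no proof of this statement: it is quoted as a known result of Anashin (references \cite{Unif0} and \cite{ANKH}), so there is no internal argument to compare against. Your argument --- preimages of balls $B_{p^{-k}}(b)$ under a 1-Lipschitz map decompose into exactly as many radius-$p^{-k}$ balls as the fiber of $f_{k-1}$ over $b$ has elements, followed by the $\pi$--$\lambda$ uniqueness step in one direction and contraposition via failure of surjectivity on the finite set $\Z/p^k\Z$ in the other --- is the standard proof found in those sources, and all steps (including the measure-theoretic extension you correctly flag as the point needing explicit justification) are sound.
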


\begin{theorem}[Theorem 2.1, \cite{MeraJNT}]
\label {cr_mer_vdp}
A 1-Lipschitz function $f:\Z_p\>\Z_p$ represented by the van der Put series (\ref{1-Lip})  preserves the measure if and only if 
\begin{enumerate}
\item $\{b_0, b_1, \ldots, b_{p-1}\}$ constitutes a complete set of residues modulo $p$ (i.e. $f(x)$ is bijective modulo $p$);
\item the elements in the set $\{b_{m+p^k}, b_{m+2p^k}, \ldots, b_{m+(p-1)p^k}\}$ are all nonzero residues modulo $p$ for any $m=0, \ldots, p^k-1,$ $k\ge 1.$
\end{enumerate}
\end{theorem}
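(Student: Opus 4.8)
The plan is to reduce the statement to the bijectivity criterion of Theorem \ref{cr_mer_Anashin} and then run an induction on the modulus level $k$. By that theorem, $f$ preserves $\mu_p$ if and only if the reduced map $f_{k-1}\colon \Z/p^k\Z\to\Z/p^k\Z$ is bijective for every $k\ge 1$. I would therefore show that this whole family of bijectivity conditions is equivalent to the pair of conditions on the van der Put coefficients $b_m$: condition (1) will be read off from the bottom level, and condition (2) at a given $m,k$ will come out of the passage from level $k$ to level $k+1$.

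The engine of the argument is an explicit description of $f$ modulo $p^{k+1}$ read off from \eqref{1-Lip}. For $k\ge 1$ and $x\in\Z_p$, write $a=[x]_k\in\{0,\dots,p^k-1\}$ and $x\equiv a+tp^k\pmod{p^{k+1}}$ with $t=x_k$. Applying the defining recursion $B_m=f(m)-f(m-m_{n-1}p^{n-1})$ to $m=a+tp^k$ (whose leading nonzero digit, for $t\neq0$, sits in position $k$, so that $B_m=p^k b_m$) and then using the $1$-Lipschitz property to pass from $f(m)$ to $f(x)\bmod p^{k+1}$, I would obtain the localization formula
\begin{equation}
\label{plan_red}
f(a+tp^k)\equiv f(a)+p^k\,b_{a+tp^k}\pmod{p^{k+1}}\qquad(t\neq 0),
\end{equation}
with $f(a+0\cdot p^k)=f(a)$ for $t=0$; here only $b_{a+tp^k}\bmod p$ matters. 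At the bottom level, a direct evaluation of \eqref{1-Lip} at $x\in\{0,\dots,p-1\}$ leaves only finitely many terms and shows that $f_0$ is bijective modulo $p$ exactly when $\{b_0,\dots,b_{p-1}\}$ is a complete residue system, which is condition (1).

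For the inductive step I would assume $f_{k-1}$ bijective modulo $p^k$. Since $f$ is $1$-Lipschitz, $f\bmod p^{k+1}$ descends along the projection $\Z/p^{k+1}\Z\to\Z/p^k\Z$ and maps the fibre $\{a+tp^k:t=0,\dots,p-1\}$ over $a$ into the fibre over $f(a)\bmod p^k$. Granting the inductive hypothesis, $f_k$ is bijective modulo $p^{k+1}$ if and only if, for every $a\in\{0,\dots,p^k-1\}$, the top digit $t\mapsto\delta_k\bigl(f(a+tp^k)\bigr)$ is a bijection of $\{0,\dots,p-1\}$. By \eqref{plan_red} this digit equals $\delta_k(f(a))$ for $t=0$ and $\delta_k(f(a))+\bigl(b_{a+tp^k}\bmod p\bigr)$ for $t\neq0$, the lower digits being unaffected by the added multiple of $p^k$. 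Hence the map is a bijection precisely when the residues $b_{a+p^k},\dots,b_{a+(p-1)p^k}$ together with $0$ exhaust $\Z/p\Z$, i.e. when they form the full set of nonzero residues modulo $p$. Setting $m=a$ and quantifying over all $a$ and all $k\ge1$ yields condition (2) and closes the induction.

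The hard part is establishing the localization formula \eqref{plan_red} cleanly: one must pin down exactly which van der Put terms survive modulo $p^{k+1}$ (using that $\chi(m,\cdot)$ is supported on the ball $m+p^{\lfloor\log_p m\rfloor+1}\Z_p$, so that a nonzero $\chi(m,x)$ forces $m$ to be a truncation of $x$), verify that the recursion collapses the contribution to the single coefficient $b_{a+tp^k}$, and confirm that the carries generated when adding $p^k b_{a+tp^k}$ do not disturb the digit $\delta_k$. A secondary point worth flagging is the reading of condition (2): the equivalence forces the $p-1$ residues $b_{m+jp^k}\bmod p$ to be not merely nonzero but pairwise distinct, so ``all nonzero residues'' must be understood as the complete set $\{1,\dots,p-1\}$; for $p\ge3$ mere nonvanishing does not suffice.
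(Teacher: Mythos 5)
The paper itself gives no proof of Theorem \ref{cr_mer_vdp} (it is quoted from \cite{MeraJNT}), and your argument is essentially the proof of the original source: reduce measure preservation to bijectivity of $f_{k-1}$ modulo $p^k$ for all $k$ via Theorem \ref{cr_mer_Anashin}, use the identity $f(a+tp^k)=f(a)+p^k b_{a+tp^k}$, and induct on $k$ through the fibre/top-digit analysis, with condition (2) correctly read as ``the $p-1$ values $b_{m+jp^k}$ form the complete set of nonzero residues modulo $p$.'' Two small remarks: the localization formula you call the hard part is in fact immediate from the stated coefficient recursion $B_m=f(m)-f(m-m_{n-1}p^{n-1})$ applied to $m=a+tp^k$ (no separate analysis of which $\chi$-terms survive is needed, and the identity is exact, not merely modulo $p^{k+1}$); and your base case tacitly uses the convention that $\chi(m,\cdot)$ is the indicator of $m+p\Z_p$ for all $0\le m\le p-1$, as in \cite{Tfunc} and \cite{MeraJNT} --- the literal ``$n=0$ if $m=0$'' in this paper's text would make $\chi(0,\cdot)\equiv 1$ and break the equivalence with condition (1), but that is a typo in the paper's setup rather than a flaw in your argument.
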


\begin{theorem}[Theorem 3.1,  \cite{SOL}]
\label {cr_mer_coord}
A 1-Lipschitz function $f:\Z_p\>\Z_p$ represented in the coordinate form (\ref {coor3}) preserves the measure if and only if all functions $\varphi_0$ and $\varphi_{k,a_k},$ $a\in \{0, 1, \ldots, p^k-1\},$ $k\ge 1$ are bijective on $\{0, \ldots, p-1\}$.
\end{theorem}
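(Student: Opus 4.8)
The plan is to reduce everything to Anashin's criterion (Theorem \ref{cr_mer_Anashin}), which states that $f$ preserves the measure precisely when each reduced map $f_{k-1}\equiv f \pmod{p^k}$ is bijective on $\Z/p^k\Z$. The representation (\ref{coor3}) is \emph{triangular}: the $k$-th digit of $f(x)$ equals $\delta_k(f(x))=\varphi_k(x_0,\ldots,x_k)=\varphi_{k,[x]_k}(x_k)$ and depends only on the digits $x_0,\ldots,x_k$ (this is exactly the 1-Lipschitz property recorded before (\ref{coor2})). Consequently $f_k(x)=\sum_{j=0}^{k}p^j\varphi_j(x_0,\ldots,x_j)\pmod{p^{k+1}}$, and the whole argument is a digit-by-digit analysis of this sum.

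For the direction ($\Leftarrow$) I would assume $\varphi_0$ and all $\varphi_{k,a}$ are bijective on $\{0,\ldots,p-1\}$ and prove by induction on $k$ that $f_{k-1}$ is bijective on $\Z/p^k\Z$. The base case $k=1$ is immediate since $f_0(x)\equiv\varphi_0(x_0)\pmod p$. For the inductive step, given $y\in\Z/p^{k+1}\Z$ write $y=[y]_k+p^k y_k$; the inductive hypothesis yields a unique $[x]_k$ with $f_{k-1}([x]_k)=[y]_k$, after which the equation $\varphi_{k,[x]_k}(x_k)=y_k$ has a unique solution $x_k$ because $\varphi_{k,[x]_k}$ is bijective. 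This recovers a unique $[x]_{k+1}$, so $f_k$ is injective, hence bijective on the finite set $\Z/p^{k+1}\Z$; Theorem \ref{cr_mer_Anashin} then gives measure preservation.

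For the direction ($\Rightarrow$) I would assume $f$ preserves the measure, so by Theorem \ref{cr_mer_Anashin} every $f_{k-1}$ is bijective. Bijectivity of $f_0$ is exactly bijectivity of $\varphi_0$. For the remaining functions I argue by contraposition: if some $\varphi_{k,a}$ fails to be injective there are $x_k\neq x_k'$ with $\varphi_{k,a}(x_k)=\varphi_{k,a}(x_k')$; choosing $x,x'$ whose first $k$ digits both equal $a$ and whose $k$-th digits are $x_k,x_k'$ makes the digits $0,\ldots,k$ of $f(x)$ and $f(x')$ coincide, so $f_k(x)=f_k(x')$ although $x\not\equiv x'\pmod{p^{k+1}}$, contradicting bijectivity of $f_k$. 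Thus each $\varphi_{k,a}$ is injective on a finite set, hence bijective.

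The argument is essentially forced once Anashin's criterion is in hand; the only point requiring care is the bookkeeping that identifies $\delta_k(f(x))$ with the sub-function $\varphi_{k,[x]_k}(x_k)$ and the repeated use of the 1-Lipschitz property to ensure that each digit of $f(x)$ depends on no higher digit of $x$. I expect this digit-dependence structure, rather than any substantial inequality or convergence issue, to be the main thing to get right.
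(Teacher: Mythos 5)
Your proof is correct. Note, however, that the paper itself contains no proof of Theorem \ref{cr_mer_coord}: it is imported as a known result (Theorem 3.1 of \cite{SOL}) and used as a tool, so there is no in-paper argument to compare against line by line. Your route --- reducing to Theorem \ref{cr_mer_Anashin} and exploiting the triangular structure $\delta_k(f(x))=\varphi_k(x_0,\ldots,x_k)=\varphi_{k,[x]_k}(x_k)$, with induction on $k$ for sufficiency and a contrapositive digit argument for necessity --- is the natural and essentially standard one for results of this type, and it matches the spirit of the cited source. The one point that genuinely needs care, namely that no carries occur because the $j$-th digit of $f(x)$ \emph{equals} $\varphi_j(x_0,\ldots,x_j)$ (so that $f_k(x)=f_{k-1}([x]_k)+p^k\varphi_{k,[x]_k}(x_k)$ with $f_{k-1}([x]_k)$ already in canonical form), is exactly the point you flagged and handled; the rest is bookkeeping, as you say.
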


\subsection {Automorphisms of algebraic systems}
\label {Automorph_def}

Recall that an algebraic system is an object $\langle \mathcal{A}, \Omega_{\mathcal{A}}, P_{\mathcal{A}} \rangle,$ where $\mathcal{A}$ is the carrier set, $\Omega_{\mathcal{A}}$ is the set of operations on $\mathcal{A}$ and $P_{\mathcal{A}}$ is the set of predicates on $\mathcal{A}.$ A predicate on the set $\mathcal{A}$ is considered as a characteristic function of the relation on this set (that is, the predicate determines the relation and vice versa). Further, we consider only binary operations and predicates (relations).

We remind that an automorphism of the algebraic system $\langle \mathcal{A}, \Omega_{\mathcal{A}}, P_{\mathcal{A}} \rangle$ is a bijective mapping $f: \mathcal{A} \to \mathcal{A}$ such that

\begin{enumerate}
    \item for any operation "$*$" from $\Omega_{\mathcal{A}}$ the map $f$ is a homomorphism with respect to the operation "$*$", that is $f(a*b)=f(a)*f(b)$ for $a, b \in \mathcal{A}$;
    \item for any predicate $\pi\in P_{\mathcal{A}},$ from $\pi(x,y)$ it follows $\pi(f(x),f(y))$ for $x,y \in \mathcal{A}$ (or in terms of relations, $x\rho_{\pi}y \Rightarrow f(x)\rho_{\pi}f(y)$, where $\rho_{\pi}$ is a relation defined by a predicate $\pi$).
\end{enumerate}

Hereinafter, we consider the algebraic system $\langle \mathcal{A}, \Omega_{\mathcal{A}}, P_{\mathcal{A}} \rangle,$ for which the carrier is a set of $p$-adic integers, namely, $\mathcal{A}=\Z_p.$ 

The family of predicates  $P_{\;\Z_p}$ determines the congruence relations  modulo $p^k,$ $k\ge 1.$    

A set of operations $\Omega_{\mathcal{A}}$ consists of one or two operations from the set $O_{\Z_p}=\{+, \cdot, \mathrm{XOR}, \mathrm{AND}\}$ given on $\Z_p.$ Here operations 
"$+$" and "$\cdot$" are arithmetical operations on $\Z_p$, and  coordinate-wise logical operations "$\mathrm{XOR}$" and "$\mathrm{AND}$" are defined in the following way. 
Let $p$-adic numbers $x, y \in \Z_p$ be defined in the canonical form.
Then, taking into account Remark \ref {rem-1}, we have 
\begin {align*}
x\mathrm{XOR}y&=(x_0\oplus_p y_0)+(x_1\oplus_p y_1)p+\ldots \\  
x\mathrm{AND}y&=(x_0\odot_p y_0)+(x_1\odot_p y_1)p+\ldots.
\end {align*}

In this paper, we consider algebraic systems of the form $\langle \Z_p, *, P_{\;\Z_p}\rangle$ or $\langle \Z_p, *_1, *_2, P_{\;\Z_p}\rangle$, where $*, *_1, *_2 \in O_{\Z_p}$. These algebraic systems differ only in the set of operations (the carrier and the set of predicates for these systems are fixed). Therefore, we shall specify only the operations under consideration to denote such algebraic systems. For example, through a $\mathcal{A}_p(*)$ we denote the algebraic system $\langle \Z_p, *, P_{\;\Z_p}\rangle$ for $*\in O_{\Z_p}.$

The set of all automorphisms of an algebraic system with respect to the operation of a composition of automorphisms forms a group which in our notation will be written in the form $Aut\mathcal{A}_p(*)$  (and $Aut\mathcal{A}_p(*_1, *_2)$ in the case of two operations). We denote an identity element  of the group of automorphisms  by $e.$


\section {Groups of automorphisms of $p$-adic integers}
\label {sec_automorph}
In this section we give a description of the groups of automorphisms of the following algebraic systems:
\begin{enumerate}
    \item $Aut\mathcal{A}_p(*),$ where $*\in O_{\Z_p},$ see subsection \ref {sec_1_operation};
    \item $Aut\mathcal{A}_p(*_1, *_2),$ where $*_1, *_2\in O_{\Z_p},$ see subsection \ref {sec_2_operation}.
\end{enumerate}

As shown in Theorem \ref{Prop_aut_2}, all groups of automorphisms  $Aut\mathcal{A}_p(*_1, *_2)$, where $*_1, *_2\in O_{\Z_p}$ are trivial groups (i.e., groups that have only one element). In this regard, in the section \ref{sec_3_new_operation} we consider the question of the existence of algebraic systems of the form $\langle \Z_p, g_1, g_2, \mathcal{P}\rangle,$ where $g_1, g_2$ are some "new" operations, for which the group of automorphisms  differs from the identity.

We use the apparatus developed in our previous works on $p$-adic dynamical systems, see, for example, \cite{MeraJNT} and \cite{SOL}, to describe the groups of automorphisms  of $p$-adic integers.

This possibility is explained by the following circumstances:
\begin{enumerate}
    \item a function $f: \Z_p\to \Z_p$ preserves all relations  modulo $p^k,$ $k\ge 1$ if and only if $f$ is a 1-Lipschitz function. Indeed, if $f(x)\equiv f(y) \pmod {p^k},$ $x, y\in \Z_p,$ $k\ge 1$ follows from $x\equiv y \pmod {p^k}$, then this is equivalent to $|f(x)-f(y)|_p\le |x-y|_p$;
    \item a composition of 1-Lipschitz functions is a 1-Lipschitz function. Indeed, $|f(g(x))-f(g(y))|_p\le |g(x)-g(y)|_p \le |x-y|_p$;
    \item a 1-Lipschitz function $f$ is bijective on $\Z_p$ if and only if $f$ preserves the measure, see  Corollary 4.5. from \cite{Subcoord_Hensel}; 
		\item a composition of measure-preserving 1-Lipschitz functions is a  measure-preserving 1-Lipschitz function.
\end{enumerate}

In terms of dynamical systems, the problem of describing automorphisms of $p$-adic integers reduces to describing the measure-preserving 1-Lipschitz functions, which preserve the operations of the considered algebraic system.

\subsection {Groups of automorphisms on $\Z_p$ with one operation}
\label {sec_1_operation}
In this section, we describe the groups of automorphisms of algebraic systems $Aut\mathcal{A}_p(*)$ for each operation from the set $O_{\Z_p}.$

Note that the functions that define the homomorphisms with respect to arithmetic operations "$+$" and "$\cdot$"  on the $p$-adic analogue of the field of complex numbers were considered in \cite{Schikhof}. In contrast to this case, we consider the functions that preserve the measure and define the homomorphism on $\Z_p$ for a wider set of binary operations. A full description of measure-preserving, 1-Lipschitz functions, which define homomorphisms for specific operations on $\Z_p,$ is presented in Theorem \ref{t_gom_ariff} (for arithmetic operations) and Theorem \ref{t_gom_coord} (for logical operations). 

\begin {theorem}[Arithmetic operations]
\label {t_gom_ariff}
$  $
\begin{enumerate}
    \item The group of automorphisms of the algebraic system $Aut\mathcal{A}_p(+)$ consists of functions $f:\Z_p\to\Z_p$ of the form:
$$
f(x)=Ax,
$$ 
where $A\in \Z_p$ and $A\not \equiv 0\pmod p.$
    \item The group of automorphisms of the algebraic system $Aut\mathcal{A}_p (\cdot)$ consists of functions  $f:\Z_p\to\Z_p$ of the form: 
\begin{equation}
\label {fun_umnz}
    f(x)=\begin{cases}
p^k A^k \theta^s(1+p\;t)^a, &\text{if} \;\; x=p^k\theta(1+t p),\\
0,                                               &\text{if}\;\; x=0
\end{cases}
\end{equation}
where $k\ge 0,$ $t, a, A \in \Z_p,$ $s\in \{1, \ldots, p-1\}$, $\theta\in \Z_p,\;\theta^{p-1}=1$ and
$$
A\not \equiv 0\pmod p, \;\;\;\; a\not \equiv 0\pmod p,\;\;\text{GCD}\;(s, p-1)=1.
$$ 
\end{enumerate}
\end {theorem}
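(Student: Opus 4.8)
The plan is to characterize the automorphisms of each algebraic system by using the general fact, noted in the excerpt, that an automorphism of $\mathcal{A}_p(*)$ is precisely a measure-preserving, 1-Lipschitz function $f:\Z_p\to\Z_p$ that is a homomorphism with respect to "$*$". So in each case I would start from this reduction and then impose the homomorphism condition.

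For part (1), the additive case, I would first use $f(x+y)=f(x)+f(y)$ together with the fact that $f$ is 1-Lipschitz (hence continuous) to deduce that $f$ is an additive continuous map on $\Z_p$. The standard argument is that additivity forces $f(n x)=n f(x)$ for $n\in\N$, then $f$ extends by $p$-adic continuity and density of $\N$ in $\Z_p$ to give $f(x)=f(1)\cdot x$ for all $x\in\Z_p$; setting $A=f(1)$ yields $f(x)=Ax$. The remaining point is the measure-preserving (equivalently bijectivity) condition: by Theorem \ref{cr_mer_Anashin} this means $f$ is bijective modulo $p$, i.e. multiplication by $A$ is a bijection on $\Z/p\Z$, which holds exactly when $A\not\equiv 0\pmod p$. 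I expect this part to be routine.

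For part (2), the multiplicative case, the structure is more delicate because $(\Z_p,\cdot)$ is not a group — $0$ and the non-units must be handled separately. The plan is to exploit the multiplicative structure of $\Z_p$: every nonzero $x$ factors uniquely as $x=p^k\theta(1+pt)$, where $p^k$ records the valuation, $\theta$ is the Teichmüller representative (a $(p-1)$-st root of unity, so $\theta^{p-1}=1$), and $1+pt$ lies in the group of principal units $1+p\Z_p$. A multiplicative homomorphism must respect each of these three factors, so I would determine its action on each factor separately. On the cyclic factor of roots of unity, a homomorphism sends $\theta\mapsto\theta^s$ for some exponent $s$; on the principal units $1+p\Z_p$, which is (for odd $p$) a free pro-$p$ group of rank one topologically generated by $1+p$, a 1-Lipschitz homomorphism has the form $(1+pt)\mapsto(1+pt)^a$ for $a\in\Z_p$; the valuation part forces $f(p^k)=(f(p))^k$, and writing $f(p)=p A$ with $A$ a unit — forced by the 1-Lipschitz and measure-preserving constraints applied to the lowest digit — gives the $p^kA^k$ factor. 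Combining these gives exactly the displayed formula. The bijectivity/measure-preserving conditions then translate into the stated side constraints: $A\not\equiv0\pmod p$ and $a\not\equiv0\pmod p$ guarantee invertibility on the unit and principal-unit parts, and $\mathrm{GCD}(s,p-1)=1$ ensures the induced map $\theta\mapsto\theta^s$ permutes the $(p-1)$-st roots of unity.

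The main obstacle I anticipate is the multiplicative case, specifically verifying that the three-factor decomposition is actually respected by an arbitrary homomorphism and that the 1-Lipschitz condition pins down the exponents $a\in\Z_p$ (rather than merely $a\in\N$) via continuity and the topological generation of $1+p\Z_p$. One must check compatibility at the "boundaries" — that the valuation, root-of-unity, and principal-unit components cannot interact in unexpected ways under a 1-Lipschitz homomorphism — and confirm that the resulting piecewise formula is genuinely measure-preserving and 1-Lipschitz, which is where the criteria of Theorem \ref{cr_mer_Anashin} (or the coordinate criterion of Theorem \ref{cr_mer_coord}) do the essential work. A minor subtlety is that the prime $p=2$ may need separate treatment, since the structure of the unit group of $\Z_2$ differs from the odd-$p$ case.
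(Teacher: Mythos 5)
Your proposal is correct, and its skeleton coincides with the paper's proof: the same reduction of automorphisms to measure-preserving 1-Lipschitz homomorphisms, the same density-and-continuity argument giving $f(x)=Ax$ in the additive case, and the same Teichm\"uller factorization $x=p^k\theta(1+pt)$ analyzed factor by factor in the multiplicative case (including the boundary checks $f(0)=0$, $f(T)=T$, $f(1+p\Z_p)\subset 1+p\Z_p$, $f(p)=pA$, which you rightly flag as the points needing the 1-Lipschitz hypothesis). The two places where your route genuinely differs are worth recording. First, to pin down the action on principal units you invoke that $1+p\Z_p$ is pro-cyclic, topologically generated by $1+p$ for odd $p$, so a continuous endomorphism is $x\mapsto x^a$ with $a\in\Z_p$; the paper instead conjugates by $\mathrm{EXP}_p/\mathrm{LN}_p$ to transfer the problem to the additive group $p\Z_p$ and reuse the part (1) argument there. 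The two are equivalent for odd $p$, but your version breaks at $p=2$, where $1+2\Z_2\cong\{\pm 1\}\times(1+4\Z_2)$ is not pro-cyclic --- you correctly flag this, and the paper's exp/log device with the adjusted domain $2^2\Z_2$ is precisely what it uses for that case, so your ``minor subtlety'' is a real gap you would still have to fill. Second, for the measure-preservation constraints you appeal to the bijectivity-mod-$p^k$ criterion of Theorem \ref{cr_mer_Anashin} (or Theorem \ref{cr_mer_coord}), whereas the paper computes the van der Put coefficients of the function (\ref{fun_umnz}) explicitly and applies Theorem \ref{cr_mer_vdp}; your route is conceptually cleaner, since bijectivity modulo $p^k$ visibly splits along the three factors, while the paper's computation is more self-contained. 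Both yield the same side conditions $A\not\equiv 0\pmod p$, $a\not\equiv 0\pmod p$, $\mathrm{GCD}\,(s,p-1)=1$.
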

     
\begin {proof} As we have already noted (see Section \ref{Automorph_def}) elements of $Aut\mathcal{A}_p(+)$ (or $Aut\mathcal{A}_p(\cdot),$ correspondingly) are 1-Lipschitz functions $f:\Z_p\to \Z_p$ (the condition of preserving the congruence relations modulo $p^k,$ $k\ge 1$). To prove this Theorem, we describe all 1-Lipschitz functions, which define a homomorphism with respect to the considered operation, and then, using the results for measure-preserving functions, we find the functions that are bijective on $\Z_p.$ As a result, we obtain a description of the elements of the groups $Aut\mathcal{A}_p(+)$ and $Aut\mathcal{A}_p(\cdot).$

Let $f$ defines a homomorphism with respect to the operation "$+$". Then $f(m)=m\cdot f(1),$ $m\in \Z.$ Let $f(1)=A\in \Z_p,$ $A\ne 0.$ Since 1-Lipschitz function $f$ is continuous on $\Z_p$ and $\Z$ is dense in $\Z_p$, then  $f(x)=A\cdot x,$ $x\in \Z_p$. The function $f(x)=A\cdot x$  preserves the measure if and only if $A\not\equiv 0\pmod p.$ It is clear that the function $f(x)=Ax$ defines a homomorphism with respect to addition on $\Z_p.$

Let $f$ defines a homomorphism with respect to multiplication on $\Z_p$. Let $f$ be distinct from the identity function. In particular, $f(0)=0.$ Indeed, if this is not so, then from  $f(0\cdot a)=f(0)f(a)=f(0)$ follows $f(a)=1$ for any $a\in \Z_p.$ In addition, we assume that there exists $a\in\Z_p$ such that $f(a)\ne 0$ (i.e. $f$ is a 
non-zero function).

We write each non-zero $p$-adic number with the aid of the Teichm\"{u}ller representation (see, for example, p. 81 in \cite {Schikhof}), namely in the following form:
\begin{equation}
\label {rep_p_adic}    
x=p^k\theta(1+pt), \; k\ge 0, \;t\in\Z_p, 
\end{equation}
and $\theta\in\Z_p,\;\;\theta^{p-1}-1=0$. 
Note that if $p=2,$ then $\theta=1$ and any non-zero $2$-adic number is represented as $x=2^k(1+2t).$

Let $p\ge 3$ and $T=\{1,\theta,\ldots,\theta^{p-2}\}$ be a set of all non-zero Teichm\"{u}ller representatives  (in other words, $T$ is the set of all solutions of equation $z^{p-1}-1=0$ in $\Z_p$). $T$ is a cyclic group (with respect to the operation of multiplication) generated, for example, by the element $\theta.$ Notice, that $f(T)=T.$ Indeed, let $f(\theta)=G\in\Z_p.$ Since $f$ is a homomorphism, then $G^{p-1}=1,$ i.e. $G\in T.$ If $G=\theta^s$ for some $s\in\{0, 1, \ldots, p-2\}$, then $f(\theta^r)=\theta^{rs}\in T.$ 
In particular, the homomorphism $f$ induces a mapping $f_T: T\to T$ of the form $z\mapsto z^s.$

As $f$ is a 1-Lipschitz function, then $1\equiv f(1)\equiv f(1+p\Z_p)\pmod p,$ i.e. $f(1+p\Z_p)\subset 1+p\Z_p.$ The set $1+p\Z_p$ forms a group with respect to the operation of multiplication. Indeed,  $(1+pt_1)(1+pt_2)=(1+p(t_1+t_2+pt_1t_2))\in 1+\Z_p$ for $t_1, t_2\in \Z_p$ and $1+p\Z_p$ is contained in the set of invertible elements of the ring $\Z_p.$ This means that $f$ induces a homomorphism  $\phi: 1+p\Z_p\to 1+p\Z_p.$ It is clear that $\phi$ is a 1-Lipschitz function (as a restriction of the 1-Lipschitz function $f$ to the set $1+p\Z_p$).  

Let $P=\{1, p, p^2, \ldots\}.$ Since $f$ is a homomorphism, then 
$$
f(P)=\{1, f(p), f(p^2), \ldots\}.
$$  
As $f$ is a 1-Lipschitz function, then $0\equiv f(0)\equiv f(p)\pmod p$, i.e. $f(p)=pA$ for some $A\in\Z_p.$

Thus, the function $f$, which defines a homomorphism with respect to multiplication on $\Z_p,$ can be represented in the form (taking into account the representation from the relation (\ref {rep_p_adic})):
\[
f(x)=\begin{cases}
p^k\cdot A^k\cdot \theta^s\cdot\phi(1+pt), &\text{if} \;\; x=p^k\theta(1+tp),\; k\ge 0,\\
0,                                                 &\text{if}\;\; x=0,
\end{cases}
\]
where $\theta\in \Z_p,\;\theta^{p-1}-1=0,$ $t\in \Z_p,$ $A \in \Z_p,$ $s\in \{0, 1, \ldots, p-2\}$ and a 1-Lipschitz function $\phi: 1+p\Z_p \to 1+p\Z_p$ defines a homomorphism with respect to multiplication on $1+p\Z_p.$

Let us find the representation of the function $\phi.$ Let $\mathrm{EXP}_p : p\Z_p\to 1+p\Z_p$ be the $p$-adic exponential function ($\mathrm{EXP}_2: 2^2\Z_2\to 1+2\Z_2$ for $p=2$) and $\mathrm {LN}_p: 1+p\Z_p\to p\Z_p$ be the $p$-adic logarithm ($\mathrm {LN}_2: 1+2\Z_2\to 2^2\Z_2$ for $p=2$). We consider the function $g: p\Z_p\to p\Z_p$ ($g: 2^2\Z_2\to 2^2\Z_2$ for $p=2$) such that $g(\tau)=\mathrm {LN}_p(\phi(\mathrm {EXP} _p(\tau))).$ Then, the function $g$ defines a homomorphism with respect to addition on $p\Z_p$ (on $2^2\Z_2$ for $p=2$):
\begin{align*}
g(\tau_1+\tau_2)=&\mathrm {LN}_p(\phi(\mathrm {EXP}_p(\tau_1+\tau_2)))=\\
&\mathrm {LN}_p(\phi(\mathrm {EXP}_p(\tau_1)\cdot\mathrm {EXP}_p(\tau_2)))=\\
&\mathrm {LN}_p(\phi(\mathrm {EXP}_p(\tau_1))\cdot\phi(\mathrm {EXP}_p(\tau_2)))=\\
&\mathrm {LN}_p(\phi(\mathrm {EXP}_p(\tau_1)))+\mathrm {LN}_p(\phi(\mathrm {EXP}_p(\tau_2)))=g(\tau_1)+g(\tau_2).
\end{align*}

Therefore, there exists $a\in \Z_p$ such that $g(\tau)=a\tau.$ Since $\mathrm {EXP}_p(\mathrm {LN}_p(1+pz))=1+pz,$ $z\in \Z_p,$ then 
$$
\mathrm {EXP}_p(g(\tau))=\mathrm {EXP}_p(a\cdot\tau)=\mathrm {EXP}_p(\tau)^a=\phi(\mathrm {EXP}_p(\tau)).
$$

Let $x=1+pt=\mathrm {EXP}_p(\tau),$ $\tau\in p\Z_p$ (and $\tau\in 2^2\Z_2$ for $p=2$). Then $\phi(x)=x^a,$ $a\in \Z_p.$ 

Thus, the function $f$ can be represented in the form 
$$
f(x)=f(p^k\theta(1+pt))=p^k\cdot A^k\cdot \theta^s\cdot(1+pt)^a.
$$

Performing the corresponding calculations, we see that the function of this type defines a homomorphism on $\Z_p$ with respect to multiplication.

Let us find the values $A, a\in \Z_p,$ $s\in \{1, 2, \ldots, p-1\},$ where the function $f$ of the form
(\ref{fun_umnz}) preserves the measure. For this, we use the criterion of Theorem \ref{cr_mer_vdp}. Let us find the value of the van der Put coefficients of the function $f.$ Let $t\in \{0, 1, \ldots, p^r-1\},$ $\theta \ne 0,$ $h\in \{1, 2, \ldots, p-1\},$ $k\ge 0.$ Then $B_0=f(0)=0$ and 
\begin{multline*}
b_{p^k\theta(1+p(t+p^{r}h)) \pmod {p^{k+r+1}}}=\frac {1}{p^{k+r}}B_{p^k\theta(1+p(t+p^{r}h)) \pmod {p^{k+r+1}}}\equiv \\
\equiv \frac {1}{p^{k+r}}\left(f(p^k\theta(1+p(t+p^{r}h)))-f(p^k\theta(1+p(t))\right)\equiv \\
\equiv a A^k\theta^sh\pmod p,\; r\ge 1,\;k\ge 1,
\end{multline*}
\begin{multline*}
b_{p^k\theta \pmod{p^{k+1}} }=\frac {1}{p^{k}}B_{p^k\theta \pmod{p^{k+1}}}\equiv \frac {1}{p^{k}} \left(f(p^k\theta)-f(0)\right)\equiv\\
\equiv A^k\theta^s \pmod p,\; r=0,\; k\ge 1,
\end{multline*}
\[
b_{\theta \pmod p}=B_{\theta \pmod p}\equiv f(\theta)\equiv \theta^s \pmod p,\; k=0.
\]

Since $\theta\not \equiv 0 \pmod p,$ then $\{b_{p^k\theta(1+p(t+p^{r}h)) \pmod {p^{k+r+1}}}: h=1, 2, \ldots, p-1\}$ coincides with the set of all non-zero residues modulo $p$ if and only if $a\not\equiv 0 \pmod p,$ and $A\not\equiv 0\pmod p.$ The set $\{b_{p^k\theta \pmod {p^{k+1}}}: \theta^{p-1}=1\},$ $k\ge 0$ coincides with the set of all non-zero residues modulo $p$ as $\text{GCD}\;(s, p-1)=1.$ Then, by Theorem \ref{cr_mer_vdp} the function $f$ preserves the measure if and only if 
$
a\not \equiv 0 \pmod p;\;\;\;A\not \equiv 0 \pmod p;\;\;\; \text{GCD}\;(s, p-1)=1.
$
\end {proof} 

\begin {remark}
If in (\ref{fun_umnz})  we set $a=n,$ $s=n,$ $A=p^{n-1}$ for some $n\in \N,$ then $f(x)=x^n.$ That is, all such polynomials define a homomorphism with respect to multiplication on $\Z_p$. Functions of the form $f(x)=x^n$ for $n > 1$ do not preserve the measure.   
\end {remark}

\begin {remark}
We note that each element (or function) $f\in Aut\mathcal{A}_p(\cdot)$ is uniquely determined by the set of parameters $(s,a,A),$ where $s\in \left(\Z/(p-1)\Z\right)^*$ and 
$a, A\in \Z_p^*.$ Here $\left(\Z/(p-1)\Z\right)^*$ is the group of units of the ring $\Z/(p-1)\Z$ and $\Z_p^*$ is the group of units of the ring $\Z_p$.
Let the elements (or functions) $f, g \in Aut_p\mathcal{A}(\cdot)$ be defined by the parameters $(s,a,A)$ and $(d,b,B).$ Then the composition $f(g)$ is determined by the parameters 
$$
\left(s\cdot d,\; a\cdot b,\; A\cdot (\theta_B)^s(1+pB_1)^a\right),
$$ where $B=\theta_B(1+pB_1).$
\end {remark}

\begin {theorem}[Logical operations]
\label {t_gom_coord}
$ $
\begin{enumerate}
    \item The group of automorphisms of the algebraic system $Aut\mathcal{A}_p(\mathrm{XOR})$ consists of  functions $f:\Z_p\to\Z_p$ given in the coordinate form:
\begin{equation*}
f(x)=f(x_0+\ldots+p^kx_k+\ldots)=\sum_{k=0}^{\infty}p^k\varphi_k(x_0,\ldots,x_k),
\end{equation*}
where $\varphi_k(x_0, \ldots, x_k)$ are $p$-valued logical functions and 
\[
\varphi_k(x_0,\ldots,x_k)=\alpha_{0}^{(k)}x_0\oplus_p\alpha_{1}^{(k)}x_1\oplus_p\ldots\oplus_p\alpha_{k}^{(k)}x_k,
\]
where $\alpha_{i}^{(k)}\in \{0, \ldots, p-1\},$ $0\le i\le k$ and $\alpha_{k}^{(k)}\not \equiv 0 \pmod p,$ $k\ge 0.$
    \item The group of automorphisms of the algebraic system $Aut\mathcal{A}_p(\mathrm{AND})$ consists of  functions $f:\Z_p\to\Z_p$ given in the coordinate form: 
\[
f(x)=f(x_0+px_1+\ldots+p^k x_k+\ldots)=\sum_{k=0}^{\infty}p^k(x_k^{s^{(k)}} \pmod p),
\]
where $\text{GCD}\;(s^{(k)}, p-1)=1,$ $k\ge 0.$
\end{enumerate}
\end {theorem}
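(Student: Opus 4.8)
The plan is to handle both parts together, exploiting that $\mathrm{XOR}$ and $\mathrm{AND}$ act digit by digit. By the remarks preceding the theorem, an element of $Aut\mathcal{A}_p(*)$ with $*\in\{\mathrm{XOR},\mathrm{AND}\}$ is exactly a measure-preserving $1$-Lipschitz map $f:\Z_p\to\Z_p$ that is a homomorphism for $*$. Since $f$ is $1$-Lipschitz, I would start from its coordinate representation (\ref{coor2}), $f(x)=\sum_{k\ge 0}p^k\varphi_k(x_0,\ldots,x_k)$, and first isolate the homomorphism constraint on each $\varphi_k$. Recording that the $k$-th digit of $x\,\mathrm{XOR}\,y$ is $x_k\oplus_p y_k$ and that of $x\,\mathrm{AND}\,y$ is $x_k\odot_p y_k$, and comparing the $k$-th coordinate of $f(x*y)$ with that of $f(x)*f(y)$, the homomorphism identity decouples coordinate by coordinate into
\[
\varphi_k(x_0*y_0,\ldots,x_k*y_k)=\varphi_k(x_0,\ldots,x_k)*\varphi_k(y_0,\ldots,y_k),
\]
where $*$ is read as $\oplus_p$ in the $\mathrm{XOR}$ case and as $\odot_p$ in the $\mathrm{AND}$ case. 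As $x_i,y_i$ range freely over $\{0,\ldots,p-1\}$, this says precisely that $\varphi_k$ is a homomorphism of $(\Z/p\Z)^{k+1}$ into $\Z/p\Z$ for the relevant operation.

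For $\mathrm{XOR}$ this is an additive, hence $\Z/p\Z$-linear, map, so $\varphi_k(x_0,\ldots,x_k)=\alpha_0^{(k)}x_0\oplus_p\cdots\oplus_p\alpha_k^{(k)}x_k$. For $\mathrm{AND}$, writing each tuple as a coordinatewise product of elementary tuples, I would factor $\varphi_k(x_0,\ldots,x_k)=\psi_0^{(k)}(x_0)\odot_p\cdots\odot_p\psi_k^{(k)}(x_k)$ with $\psi_i^{(k)}(t)=\varphi_k(1,\ldots,t,\ldots,1)$ a monoid endomorphism of $(\Z/p\Z,\cdot)$. Classifying these is routine: since the only idempotents of the field are $0$ and $1$, either $\psi(0)=1$, which forces the constant map $\psi\equiv 1$, or $\psi(0)=0$, in which case $\psi$ restricts to a group endomorphism $t\mapsto t^s$ of the cyclic group $(\Z/p\Z)^*$.

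The last ingredient is bijectivity, supplied by Theorem \ref{cr_mer_coord}: every subfunction $\varphi_{k,a}(x_k)$ must permute $\{0,\ldots,p-1\}$. In the $\mathrm{XOR}$ case, fixing the lower digits leaves $\varphi_{k,a}(x_k)=c\oplus_p\alpha_k^{(k)}x_k$, which is bijective if and only if $\alpha_k^{(k)}\not\equiv 0\pmod p$; this gives part~1. The $\mathrm{AND}$ case carries the real content, and I expect it to be the main obstacle. Fixing $x_0=a_0,\ldots,x_{k-1}=a_{k-1}$ gives $\varphi_{k,a}(x_k)=c_a\odot_p\psi_k^{(k)}(x_k)$ with $c_a=\psi_0^{(k)}(a_0)\odot_p\cdots\odot_p\psi_{k-1}^{(k)}(a_{k-1})$, so bijectivity forces $c_a\not\equiv 0\pmod p$ for \emph{every} choice of the $a_i$. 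The key point is that any monoid endomorphism other than the constant $1$ sends $0$ to $0$; hence taking some $a_i=0$ would collapse $c_a$ to $0$ unless each $\psi_i^{(k)}$ with $i<k$ is the constant map $1$, which is exactly the step that kills all dependence on the lower digits. Thus $\varphi_k(x_0,\ldots,x_k)=\psi_k^{(k)}(x_k)$, and a power map on $(\Z/p\Z)^*$ is a bijection precisely when $\text{GCD}(s^{(k)},p-1)=1$, yielding $\varphi_k(x_k)=x_k^{s^{(k)}}\pmod p$ and hence part~2. Finally I would verify the converse: functions of the two stated forms are $1$-Lipschitz, satisfy the coordinatewise homomorphism identity, and meet the criterion of Theorem \ref{cr_mer_coord}, so they are indeed automorphisms.
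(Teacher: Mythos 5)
Your proposal follows essentially the same route as the paper's proof: coordinate representation of the 1-Lipschitz function, coordinatewise decoupling of the homomorphism identity, factorization of $\varphi_k$ into single-digit endomorphisms by fixing the other digits at the identity element ($0$ for $\mathrm{XOR}$, $1$ for $\mathrm{AND}$), and then Theorem \ref{cr_mer_coord} to force $\alpha_k^{(k)}\not\equiv 0 \pmod p$ in the $\mathrm{XOR}$ case and to eliminate dependence on the lower digits in the $\mathrm{AND}$ case. Your explicit classification of the multiplicative monoid endomorphisms of $\Z/p\Z$ (constant $1$ versus power maps vanishing at $0$) is slightly more careful than the paper's treatment, which encodes the constant map as the exponent-zero power, but the argument is the same.
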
 
 
\begin {proof} 

According to Proposition 3.33 in \cite {ANKH}, a function represented in coordinate form 
\begin{equation*}
f(x)=f(x_0+\ldots+p^kx_k+\ldots)=\sum_{k=0}^{\infty}p^k\varphi_k(x_0,\ldots,x_k),
\end{equation*}
where $\varphi_k(x_0, \ldots, x_k)$ are $p$-valued logical functions, is a 1-Lipschitz function. 

Let $f$ defines a homomorphism with respect to the operation "$\mathrm{XOR}$" on $\Z_p$, i.e., $\varphi_k(x_0\oplus_py_0, \ldots, x_k\oplus_py_k)=\varphi_k(x_0, \ldots, x_k)\oplus_p\varphi_k(y_0, \ldots, y_k),$ $x_i, y_j\in\{0, 1, \ldots, p-1\},$ $k\ge 0.$
Let 
\[
\varphi_{k,r}(x)=\varphi_{k}(\underbrace {0,\ldots,0}_{r}, x,0,\ldots,0),\;0 \le  r\le k.
\]
Since $\varphi_{k,r}(x\oplus_py)=\varphi_{k,r}(x)\oplus_p\varphi_{k,r}(y),$ $x,y\in \Z/p\Z$, then $\varphi_{k,r}(x)$ is the homomorphism on $\Z/p\Z$ with respect to addition. Therefore, $\varphi_{k,r}(x)=a_r^{(k)}x,$ $a_r\in \Z/p\Z$ (i.e. $\Z/p\Z$ is a cyclic group with respect to the addition operation).
Since
\[
\varphi_k(x_0,\ldots,x_k)=\varphi_{k,0}(x_0)\oplus_p\ldots\oplus_p\varphi_{k,k}(x_k),
\]
then $\varphi_k(x_0, \ldots, x_k)=a_0^{(k)}x_0\oplus_pa_1^{(k)}x_1\oplus_p\ldots\oplus_pa_k^{(k)}x_k,$ $k\ge 0,$ where $a_i^{(j)}\in \{0, \ldots, p-1\}.$

It is clear that a function represented by the  coordinate functions  defines a homomorphism on $\Z_p$ with respect to the operation "$\mathrm{XOR}$".

Thus, the function
\[
f(x)=f(x_0+x_1p+\ldots)=\sum_{k=0}^{\infty} (a_0^{(k)}x_0\oplus_pa_1^{(k)}x_1\oplus_p\ldots\oplus_pa_k^{(k)}x_k)p^k
\]
defines a homomorphism on $\Z_p$ with respect to the operation $\mathrm{XOR}.$ 
Coordinate sub-functions of the function $f$ from the representation (\ref{coor3}) have the form $c\oplus_pa_k^{(k)}x_k,$ $c\in \{0, \ldots, p-1\}.$ These sub-functions are bijective on $\Z/p\Z$ as $a_k^{(k)}\not\equiv 0\pmod p.$ Thus, by Theorem \ref{cr_mer_coord} the function $f$ preserves the measure if and only if $a_k^{(k)}\not\equiv 0\pmod p,$ $k\ge 0.$

Let us prove the second statement of the theorem. Let $f$ be a homomorphism with respect to the operation "$\mathrm{AND}$" on $\Z_p,$ i.e., 
\begin{multline*}
\varphi_k(x_0\odot_p y_0,\ldots,x_k\odot_p y_k)=\\
=\varphi_k(x_0,\ldots,x_k)\odot_p \varphi_k(y_0,\ldots,y_k),\; x_i,y_j\in\{0,1,\ldots,p-1\},\; k\ge 0.
\end{multline*}

Let 
\[
\varphi_{k,r}(x)=\varphi_{k}(\underbrace {1,\ldots,1}_{r}, x,1,\ldots,1),\;0\le  r\le k.
\]
Since $\varphi_{k,r}(x\odot_p y)=\varphi_{k,r}(x)\odot_p\varphi_{k,r}(y),$ $x, y\in \Z/p\Z$, then $\varphi_{k,r}(x)$ is the homomorphism on $\Z/p\Z$ with respect to multiplication. Therefore, $\varphi_{k,r}(x)=x^{s_r^{(k)}}$ for $s_r^{(k)}\in \{0, 1, \ldots, p-1\}.$
Since
$$
\varphi_k(x_0, \ldots, x_k)=\varphi_{k,0}(x_0) \oplus_p\ldots \oplus_p\varphi_{k,k}(x_k),
$$
then $\varphi_k(x_0, \ldots, x_k)=a_0^{(k)}x_0\oplus_pa_1^{(k)}x_1\oplus_p\ldots\oplus_pa_k^{(k)}x_k,$ $k\ge 0,$ where $a_i^{(j)}\in \{0, \ldots, p-1\}.$

It is clear that a function, represented by the  coordinate functions,  defines a homomorphism on $\Z_p$ with respect to the operation  "$\mathrm{AND}$" and has the form:
\[
f(x)=f(x_0+x_1p+\ldots)=\sum_{k=0}^{\infty} (x_0^{s_0^{(k)}}\odot_p\ldots\odot_p x_k^{s_k^{(k)}})p^k.
\]

Coordinate sub-functions of the function $f$ from the representation (\ref{coor3}) have the form $a_0^{s_0^{(k)}}\odot_p\ldots \odot_pa_{k-1}^{s_{k-1}^{(k)}}\odot x_k^{s_k^{(k)}},$ $a_{i}\in \{0, \ldots, p-1\},$ $0\le i\le k-1.$ 
These sub-functions are bijective on $\Z/p\Z$ if and only if $s_0^{(k)}\equiv s_1^{(k)}\equiv\ldots \equiv  s_{k-1}^{(k)}\equiv 0 \pmod p$ and $\text{GCD}\;(s_k^{(k)}, p-1)=1.$ 

Thus, by Theorem \ref{cr_mer_coord} the function $f$ preserves the measure if and only if $s_0^{(k)}\equiv s_1^{(k)}\equiv\ldots \equiv s_{k-1}^{(k)}\equiv 0 \pmod p$ and 
$\text{GCD}\;(s_k^{(k)}, p-1)=1,$ $k\ge 0.$ To complete the proof, we put $s_k^{(k)}=s^{(k)},$ $k\ge 0.$
\end {proof}


\subsection {Groups of automorphisms on $\Z_p$ with two operations}
\label {sec_2_operation}
In this section we consider groups of automorphisms of algebraic systems of $p$-adic integers
$Aut\mathcal{A}_p(*_1, *_2)$, where $*_1, *_2\in O_{\Z_p}=\{+, \cdot, \mathrm{XOR}, \mathrm{AND}\}$ is the set of considered arithmetic and coordinate-wise logical operations. As we show in Theorem \ref{Prop_aut_2}, each of these  groups (a total of 6 groups of automorphisms) is trivial. We denote a trivial group by $e.$


\begin {theorem} 
\label {Prop_aut_2}
For the groups of automorphisms $Aut\mathcal{A}_p(*_1, *_2),$ where $*_1, *_2\in O_{\Z_p},$ the following relations hold:
\begin {align*} 
Aut\mathcal{A}_p(+,\cdot)=&Aut\mathcal{A}_p(+,\mathrm{XOR})=\\
&Aut\mathcal{A}_p(+, \mathrm{AND})=Aut\mathcal{A}_p(\;\cdot\;, \mathrm{XOR})=\\
&Aut\mathcal{A}_p(\;\cdot\;, \mathrm{AND})=Aut\mathcal{A}_p(\mathrm{XOR}, \mathrm{AND})=e.
\end {align*}
\end {theorem}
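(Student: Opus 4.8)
The plan is to use the fact that a bijection preserving two operations is exactly a common automorphism for each, so
$Aut\mathcal{A}_p(*_1,*_2)=Aut\mathcal{A}_p(*_1)\cap Aut\mathcal{A}_p(*_2)$. Every map in question therefore already carries the explicit shape supplied by Theorems \ref{t_gom_ariff} and \ref{t_gom_coord}, and the whole theorem reduces to intersecting two of those parametrized families and checking that only the identity $e$ survives. I would group the six pairs by difficulty.

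The three pairs containing ``$+$'' are easiest, because by Theorem \ref{t_gom_ariff} every element of $Aut\mathcal{A}_p(+)$ has the rigid form $f(x)=Ax$ with $A\not\equiv0\pmod p$. For $(+,\cdot)$, imposing $f(xy)=f(x)f(y)$ gives $Axy=A^2xy$, so $A=1$ and $f=e$. For $(+,\mathrm{AND})$, an $\mathrm{AND}$-automorphism fixes each $p^k$ (its $k$-th coordinate sends the digit $1$ to $1^{s^{(k)}}=1$ and annihilates the others since $0^{s^{(j)}}=0$), while $f(p^k)=Ap^k$; comparison forces $A=1$. For $(+,\mathrm{XOR})$, I would compare $f(x)=Ax$ with the $\oplus_p$-linear coordinate form of an $\mathrm{XOR}$-automorphism: the first nontrivial coordinate $\delta_1(Ax)$ contains the carry term $\lfloor a_0x_0/p\rfloor$ (with $a_0=A\bmod p$), which is not additive in $x_0$ over $\Z/p\Z$ unless $a_0=1$, giving $A\equiv1\pmod p$; an induction on the coordinate index then upgrades this to $A=1$.

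The pair $(\mathrm{XOR},\mathrm{AND})$ is settled by matching the two coordinate representations of Theorem \ref{t_gom_coord}: an $\mathrm{AND}$-automorphism has $\delta_k(f(x))=x_k^{s^{(k)}}\bmod p$ depending only on $x_k$, whereas an $\mathrm{XOR}$-automorphism has $\delta_k(f(x))$ equal to an $\oplus_p$-linear form in $x_0,\dots,x_k$. A function in both must satisfy $\alpha_k^{(k)}x_k\equiv x_k^{s^{(k)}}\pmod p$ for every digit $x_k$; evaluating at $x_k=1$ gives $\alpha_k^{(k)}=1$, after which $x^{s^{(k)}}\equiv x$ on $\Z/p\Z$ forces $s^{(k)}\equiv1\pmod{p-1}$, so $f=e$. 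This ``a linear map equal to a power map on $\Z/p\Z$ is the identity'' observation will recur below.

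The main obstacle is the two pairs containing ``$\cdot$'', since $Aut\mathcal{A}_p(\cdot)$ is parametrized by a triple $(s,a,A)$ through the Teichm\"{u}ller decomposition (\ref{fun_umnz}) that must be matched against a digit-based family; I would kill the three parameters separately. The exponent $s$ is handled cleanly: for $\mathrm{XOR}$, $f(1)=1$ forces $\alpha_0^{(0)}=1$, hence $\delta_0(f(x))=x_0$, and then $\delta_0(f(\theta))=\theta_0$ together with $f(\theta)=\theta^s$ gives $\theta_0^{\,s}=\theta_0$, so $s\equiv1\pmod{p-1}$ by primitivity of $\theta_0$; for $\mathrm{AND}$ (after the steps below) one writes $x=\theta(1+pt)$, whose digit $\delta_0(x)=\theta_0$ is constant in $t$, so $f(x)-x$ is a fixed integer while $f(x)-x=(\theta^s-\theta)(1+pt)$, and letting $t$ vary forces $\theta^s=\theta$, i.e. $s\equiv1\pmod{p-1}$ (and then $\theta_0^{s^{(0)}}=\theta_0$ gives $s^{(0)}\equiv1$). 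The scaling $A$ is pinned by evaluating at $x=p$ (image $p$ for $\mathrm{AND}$, coordinate comparison of $f(p)=pA$ for $\mathrm{XOR}$), and the exponent $a$ by restricting to the principal units $1+p\Z_p$, where $f(x)=x^a$: the term $\binom{a}{2}p^2u^2$ in the expansion of $(1+pu)^a$ destroys the required coordinate shape (additivity for $\mathrm{XOR}$, single-digit dependence for $\mathrm{AND}$) unless $a\equiv1\pmod p$; for odd $p$ one may instead note that $f$ must fix the $\mathrm{AND}$-identity $\sum_kp^k$, a principal unit of infinite multiplicative order, forcing $a=1$ at once. The genuinely laborious point, and the step I expect to be the crux, is promoting these congruences modulo $p$ to the exact equalities $A=1$ and $a=1$ in $\Z_p$: this requires an induction on the $p$-adic digit index in which the carries produced by multiplication must be tracked and shown to vanish, so that the two coordinate representations coincide digit by digit. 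Once $s\equiv1\pmod{p-1}$, $a=1$ and $A=1$ in each case, the map is $f=e$, establishing all six claimed identities.
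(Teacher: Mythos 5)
Your reduction $Aut\mathcal{A}_p(*_1,*_2)=Aut\mathcal{A}_p(*_1)\cap Aut\mathcal{A}_p(*_2)$ is exactly the paper's starting point, and your handling of the four pairs $(+,\cdot)$, $(+,\mathrm{AND})$, $(+,\mathrm{XOR})$ and $(\mathrm{XOR},\mathrm{AND})$ is correct and essentially identical to the paper's arguments (for $(+,\mathrm{XOR})$ the paper carries out precisely the carry-vanishing induction you sketch). The genuine gap is in the two pairs containing ``$\cdot$'', and it sits exactly at the step you yourself flag as the crux: you never prove the exact equalities $A=1$ and $a=1$, you only announce an ``induction on the $p$-adic digit index in which the carries produced by multiplication must be tracked and shown to vanish''. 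That induction is not executed, and it would be substantially harder than the $(+,\mathrm{XOR})$ case, since there one expands the linear function $Ax$ digit by digit, whereas here one would have to expand $p^kA^k\theta^s(1+pt)^a$. Worse, one of your concrete claims is false as stated: for $(\cdot,\mathrm{XOR})$, evaluating at $x=p$ does \emph{not} pin $A$, because the $\mathrm{XOR}$-coordinate form only yields $f(p)=\sum_{k\ge1}\alpha_1^{(k)}p^k$, which merely re-expresses the unknown $A$ through the unknown parameters $\alpha_1^{(k)}$ and produces no normalization. Likewise, the claim that the term $\binom{a}{2}p^2u^2$ ``destroys the required coordinate shape'' unless $a\equiv 1\pmod p$ is heuristic: you would have to rule out that its digits conspire with carries to remain $\oplus_p$-linear, which is not done.

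The paper closes these two cases by different, complete arguments that avoid any digit induction. For $(\cdot,\mathrm{XOR})$ it exploits the identity $1\mathrm{XOR}(p+p^2t)=1+p+p^2t$: applying $f$ to both sides gives the functional equation $1+pA(1+pt)^a=(1+p+p^2t)^a$ for all $t\in\Z_p$; setting $t=0$ gives $1+pA=(1+p)^a$, while differentiating in $t$ and then setting $t=0$ gives $A=(1+p)^{a-1}$, whence $1+pA=(1+p)A$, so $A=1$ and then $a=1$; the exponent $s$ is then removed by the mod-$p$ comparison you also give. For $(\cdot,\mathrm{AND})$ the paper never needs exact values of $A$ and $a$ at all: comparing the two representations on $x=p^kx_k$ gives $s_k=s$ for every $k$ (and $A\equiv1\pmod p$), and comparing them on $x=1+pt$ modulo $p^2$ gives $t^s\equiv at\pmod p$ for all $t$, which by a degree count forces $s=1$; since $f\in Aut\mathcal{A}_p(\mathrm{AND})$, its coordinate form is then $f(x)=\sum_kp^kx_k=x$ outright. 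Your alternative idea of using the $\mathrm{AND}$-identity $\sum_kp^k$ as a fixed point to force $a=1$ is a valid and pleasant observation for odd $p$, but as written it handles only $a$, only for that one pair, and leaves $p=2$ open; so the proposal, as it stands, does not prove the theorem.
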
 

\begin {proof}
Note that $Aut\mathcal{A}_p(*_1, *_2)=Aut\mathcal{A}_p(*_1)\cap Aut\mathcal{A}_p(*_2)$ for $*_1, *_2\in O_{\Z_p}.$
 
Let us show that $Aut\mathcal{A}_p(+,\cdot)=e.$  
Let $f(x)\in Aut\mathcal{A}_p(+)\cap Aut\mathcal{A}_p(\cdot).$ From Theorem \ref{t_gom_ariff} it follows that $f(x)=Ax,$ $A\in \Z_p.$  As $f(x)\in Aut\mathcal{A}_p(\cdot),$ then 
$f(1)=1.$ Thus, $f(1)=A=1,$ that is, $f(x)=x$ and $Aut\mathcal{A}_p(+,\cdot)=e.$ 

Let us show that $Aut\mathcal{A}_p(+,\mathrm{XOR})=e.$
 
Let $f(x)\in Aut\mathcal{A}_p(+)\cap Aut\mathcal{A}_p(\mathrm{XOR}).$ 
We see that $f=Ax,$ $A\in \Z_p,$ $A\not\equiv 0\pmod p$ by Theorem \ref{t_gom_ariff}. Let 
$$
A=a_0+a_1p+\ldots,\;a_k\in \{0, \ldots, p-1\},\;a_0\not\equiv 0\pmod p.
$$  
Then
\begin{equation}
\label {2_op_1}
f(x)=f(x_0+x_1p+\ldots)=\sum_{k=0}^{\infty}p^k\left(\sum_{s=0}^{k}a_{s}x_{k-s}\right).
\end{equation}

Let us show that $f(x)\equiv x\pmod {p^{k+1}},$ $k\ge 0.$ 
Set $k=0.$ It is clear that $f(x)\equiv a_0x_0\pmod p.$ A product of  $a_0x_0$ in $\Z_p$ can be represented as $a_0x_0=a_0x_0\pmod p+p\delta (x_0,a_0),$ where $\delta (a_0,x_0)$ 
is a $p$-valued function that reflects the transfer of the digit in operations on  $p$-adic numbers when written in canonical form. 

Notice, that $\frac{f(x)-f(x)\pmod p}{p}\equiv a_0x_1\oplus_p a_1x_0\oplus_p \delta(a_0,x_0)\pmod p.$ Since $f\in Aut\mathcal{A}_p(\cdot),$ then $\frac{f(x)-f(x)\pmod p}{p}$ is a linear $p$-valued function, see Theorem \ref {t_gom_coord}. Then $\delta(a_0,x_0)\pmod p$ is also linear function, i.e. $\delta(a_0,x_0)\equiv \alpha x_0\oplus_p \beta\pmod p.$ Set $x_0=0$ or $x_0=1,$ we obtain $0\equiv\delta(a_0,0)\equiv\beta\pmod p$ and $0\equiv\delta(a_0,1)\equiv\alpha\oplus_p\beta\equiv\alpha\pmod p$ (since digit transfer does not occur). In other words, $\delta(a_0,x_0)\equiv 0\pmod p$ for any $x_0\in \{0, \ldots, p-1\}.$ Then $a_0=1$ (in this case, the digit is not transferred for any value of $x_0$), so $f(x)\equiv x_0\pmod p.$ 

Let $f(x)\equiv x\pmod {p^k}.$ In particular, in the representation (\ref {2_op_1}) we have $a_1=\ldots=a_{k-1}=0$ and
\begin {multline*}
f(x)=f(x_0+x_1p+\ldots)=x_0+x_1p+\ldots+x_{k-1}p^{k-1}+\\
+(x_k+a_kx_{0})p^{k}+(x_{k+1}+a_kx_{1}+a_{k+1}x_{0})p^{k+1}+\ldots .
\end {multline*}
Let 
\begin{equation}
\label {2_op_2}    
x_k+a_kx_{0}=x_k\oplus_pa_kx_{0}+\delta(x_0,x_k)p,
\end{equation}
(here the function $\delta$ reflects the fact of digit transfer).
Thus 
$$
\frac{f(x)-f(x) \pmod {p^{k+1}}}{p^{k+1}}\equiv x_{k+1}\oplus_pa_kx_1\oplus_pa_{k+1}x_{0}\oplus_p\delta(x_0,x_k)\pmod p.
$$
Since $f\in Aut\mathcal{A}_p(\cdot),$ then $\frac{f(x)-f(x) \pmod {p^{k+1}}}{p^{k+1}}$ is a linear $p$-valued function, see Theorem \ref {t_gom_coord}. Therefore, 
$$
\delta(x_0,x_k)=\alpha_{k}x_k\oplus_p\alpha_{0}x_0\oplus_p\beta.
$$ 
Notice, that $\delta(x_0,x_k)\equiv 0$ with the following values of the variables $x_0=x_k=0;$ $x_0=1$ and $x_k=0;$ $x_0=0$ and $x_k=1,$ so, therefore, $\alpha_{k}=\alpha_{0}=\beta=0.$ Then $\delta(x_0,x_k)\equiv 0$ for any values of $x_0, x_k.$ Thus,  $a_k=0$ in (\ref {2_op_2}) (in this case, the digits are not transferred for any values of $x_0$ and 
$x_k$), i.e. $f(x)\equiv x\pmod {p^{k+1}}.$ As a result $f(x)\equiv x\pmod {p^{k+1}}$ for any $k\ge 0,$ i.e. $f(x)=x$ and $Aut\mathcal{A}_p(+,\mathrm{XOR})=e.$

Let us show that $Aut\mathcal{A}_p(+,\mathrm{AND})=e.$ 

Suppose that $f(x)\in Aut\mathcal{A}_p(+)\cap Aut\mathcal{A}_p(\mathrm{AND}),$ then we obtain $f(p^k)=p^k=Ap^k.$ Then $A=1$ and $Aut\mathcal{A}_p(+,\mathrm{AND})=e.$   

Let us show that $Aut\mathcal{A}_p(\cdot,\mathrm{XOR})=e.$ 
Let $f\in Aut\mathcal{A}_p(\cdot)\cap Aut\mathcal{A}_p(\mathrm{XOR}).$ As $1+p+p^2t=1\mathrm{XOR}(p+p^2t),$ then
\begin {multline}
\label {f_2_xor}
1+pA(1+pt)^a=1\mathrm{XOR}pA(1+pt)^a=1\mathrm{XOR}f(p(1+pt))=\\f(1\mathrm{XOR}(p+p^2t))=
f(1+p+p^2t)=(1+p+p^2t)^a.
\end {multline}
Set $t=0$ and differentiate functions from (\ref{f_2_xor}), then we get 
\begin{equation}
\label {f_3_xor} 
1+pA=(1+p)^a\;\; \text {and} \;\; A=\left(\frac {1}{1+pt}+p\right)^{a-1}.
\end{equation}

Set $t=0$ in (\ref {f_3_xor}), we get $A=(1+p)^{a-1}$ and $1+pA=(1+p)A.$ Thus $a=A=1.$ 

Using the representation of the second statement of the Theorem \ref{t_gom_ariff} and the second statement of Theorem \ref{t_gom_coord} for the function $f$ for $t\in \theta(1+p\Z_p),$ $\theta^{p-1}=1$ and setting $x_0\equiv\theta \pmod p,$ $x_0\in\{1, \ldots, p-1\}$ we obtain 
\begin {align*}
f(\theta(1+pt))\equiv&\theta^s(1+pt)\equiv \theta^s\equiv x_0^s \pmod p\\
f(\theta(1+pt))\equiv&\varphi_0(x_0)\equiv\alpha_{0}^{(0)}x_0\pmod p.
\end {align*}
Then $\alpha_{0}^{(0)}=1,$ $s=1.$ Thus, 
\[
f(x)=\begin{cases}
p^kt,&\text{if} \;\;x=p^kt,\; t\not \equiv 0 \pmod p,\;k\ge 0;\\
0,   &\text{if} \;\;x=0.
\end{cases}
\]
That is $f(x)=x$ and $Aut\mathcal{A}_p(\cdot,\mathrm{XOR})=e.$ 

Let us show that $Aut\mathcal{A}_p(\cdot,\mathrm{AND})=e.$ 
Let $f\in Aut\mathcal{A}_p(\cdot)\cap Aut\mathcal{A}_p(\mathrm{AND}).$
Set $x=p^kx_k,$ $x_k\in\{1, \ldots, p-1\}.$ Note that this number takes the form $x=p^k\theta(1+pt_{\theta})$ in  Teichm\"{u}ller representation, where $t_{\theta}\in\Z_p$ are choosen so that $\theta(1+pt_{\theta})\in\{1, \ldots, p-1\}$ and $\theta\equiv x_k\pmod p.$ Here we use the canonical representation of  $p$-adic numbers $\theta,\;\theta^{p-1}=1$ for the choice of such numbers $t_{\theta}.$

Taking into account Theorems \ref {t_gom_ariff} and \ref {t_gom_coord}, we obtain that
$$
p^kx_k^{s_k}\equiv f(x_kp^k)\equiv f(p^k\theta(1+pt_{\theta}))\equiv p^kA^k\theta^s\equiv p^kA^kx_k^s \pmod {p^{k+1}}, \; k\ge 0.
$$
Then, $s_k=s,$ $k\ge 0$ (moreover, $A\equiv 1\pmod p$).

Let $x=(1+pt),$ $t\in \{0, \ldots, p-1\}$ (we have $\theta=1,$ $k=0$ in the representation of $p$-adic numbers from item 2 in Theorem \ref{t_gom_ariff}). Taking into account the representation of $f$ from Theorem \ref{t_gom_ariff} and Theorem \ref{t_gom_coord}, we get
\begin {align*}
f(1+pt)\equiv &(1+pt)^a\equiv 1+atp\pmod {p^2};\\
f(1+pt)\equiv &1+(t^s \pmod p)p\pmod {p^2},
\end {align*}
i.e. $t^s\equiv at\pmod p.$ Since $s\in \{1, \ldots, p-1\}$, then $s=1$ and $s_k=s=1,$ $k\ge 0.$ Using the representation of $f$ from Theorem \ref{t_gom_coord}, we obtain 
$$
f(x_0+x_1p+\ldots)=\sum_{k=0}^{\infty}p^k(x_k^{s_k} \pmod p)=\sum_{k=0}^{\infty}p^kx_k. 
$$

That is $f(x)=x$ and $Aut\mathcal{A}_p(\cdot,\mathrm{AND})=e.$

Let us show that $Aut\mathcal{A}_p(\mathrm{XOR},\mathrm{AND})=e.$ 
Let $f\in Aut\mathcal{A}_p(\mathrm{XOR})\cap Aut\mathcal{A}_p(\mathrm{AND}).$ Using the coordinate representation of the function  $f$ (Theorem \ref {t_gom_coord}), we obtain
\[
\alpha_{0}^{(k)}x_0\oplus_p\ldots\oplus_p\alpha_{k-1}^{(k)}x_{k-1}\oplus_p\alpha_{k}^{(k)}x_k=x_k^{s_k^{(k)}},\;\; k\ge 0.
\] 
Then, $\alpha_{0}^{(k)}=\ldots=\alpha_{k-1}^{(k)}=0$ for $\alpha_{k}^{(k)}=1$ and $s_k^{(k)}=1.$
Thus, $f(x)=x$ and $Aut\mathcal{A}_p(\cdot,\mathrm{AND})=e.$
\end {proof}

\subsection {Groups of automorphisms on $\Z_p$ with "new" operations}
\label {sec_3_new_operation}

In connection with the results of section \ref {sec_2_operation}, we consider the question of the existence of algebraic systems of the form $Aut\mathcal{A}_p(g_1, g_2)=\langle \Z_p, g_1, g_2, \mathcal{P}_{\Z_p}\rangle$ for "new" pairs of binary operations $g_1, g_2$ on $\Z_p$  for which the group of automorphism  differs from the unit.
In particular, in Proposition \ref{new_operation} we describe all the "new" operations "$G$" (here we assume that these operations are given in the form of a series (\ref{op_new}) convergent on $\Z_p$), for which the group $Aut\mathcal{A}_p(+, G)$ is distinct from the identity group.

On the other hand, suppose that the "new" operations ($g_1, g_2$) are given using a formula in the basis of two arbitrary arithmetic and coordinate-wise logical operations over $\Z_p.$
In Proposition \ref{formuls}, we show that the necessary condition for the non-triviality of the group of automorphisms  $Aut\mathcal{A}_p(g_1, g_2)$ is that the set of formulas in the basis of the operations $g_1, g_2$ does not coincide with the set of formulas in basis of arithmetic operations over $\Z_p.$

Let the "new" operation "$G$" is given by a series converges on $\Z_p$ (it is sufficient to require that the general term of the series converges to zero in the $p$-adic metric), namely:

\begin{equation}
    \label {op_new}
G(x,y)=c+ax+by+\sum_{k=1}^{\infty}\;\sum_{i+j=n_k} c_{i,j}x^iy^j,\;\; c_{i,j}, a,b,c\in \Z_p,
\end{equation}
where for any $n_k\in\{n_1, n_2, \ldots \;|\; n_k\in \N,\; 1<n_1<n_2<\ldots\}=\mathcal{N}_G$ there exist $0 \le  i, j \le n_k$ such that $c_{i,j}\ne 0,$ and if $n\not \in \mathcal{N}_G,$ then $c_{i,j}=0$ for any $0 \le  i, j \le n,$ $i+j=n.$

\begin {proposition}
\label {new_operation}
Let the binary operation "$G$" on $\Z_p$ be defined by means of the series (\ref {op_new}). 

A group $Aut\mathcal{A}_p(+, G)\ne e$ if and only if the following relations hold for $\mathcal{N}_G\ne \emptyset:$
\begin{enumerate}
    \item $c=0;$
    \item $n_k=dq_k+1,$ $k\ge 1,$ where $d=p^s\cdot n,$ $q_k\in \N,$ $p\nmid n;$
    \item $\text {GCD} (n, p-1)\ne 1$ for $p\ne 2$ and $s=1$ for $p=2,$
\end{enumerate}
and relation $G=ax+by,$ $a, b\in \Z_p,$ $a, b\ne 0$ holds for $\mathcal{N}_G = \emptyset.$  

In other cases $Aut\mathcal{A}_p(+, G)=e.$ 
\end {proposition}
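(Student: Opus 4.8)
The plan is to exploit the identity $Aut\mathcal{A}_p(+,G)=Aut\mathcal{A}_p(+)\cap Aut\mathcal{A}_p(G)$ and to feed in the already-proved description of $Aut\mathcal{A}_p(+)$. By the first part of Theorem \ref{t_gom_ariff}, every element of $Aut\mathcal{A}_p(+)$ is of the form $f(x)=Ax$ with $A\in\Z_p$, $A\not\equiv 0\pmod p$, so the entire question collapses to deciding for which units $A\in\Z_p^*$ the map $f(x)=Ax$ is in addition a homomorphism with respect to $G$. Writing $f(G(x,y))=G(f(x),f(y))$ out with the series (\ref{op_new}) turns this into the single functional identity $A\cdot G(x,y)=G(Ax,Ay)$; since $|A|_p=1$ the substituted series $G(Ax,Ay)$ converges on the same domain as $G$, so the identity is meaningful on all of $\Z_p\times\Z_p$.

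First I would compare coefficients. A convergent power series over $\Z_p$ whose coefficients tend to $0$ and which vanishes on all of $\Z_p$ (in one variable by Strassmann's theorem, in two variables by applying it successively in each variable) is the zero series, so equating the two sides monomial by monomial is legitimate. The constant term gives $(A-1)c=0$, the linear parts $ax,by$ transform as $A(ax+by)=a(Ax)+b(Ay)$ and so impose no condition, and each monomial $x^iy^j$ with $i+j=n_k$ yields $c_{i,j}\bigl(A^{n_k}-A\bigr)=0$. Because $A\neq 0$ and, for every $n_k\in\mathcal{N}_G$, at least one $c_{i,j}\neq 0$, this is equivalent to $A^{\,n_k-1}=1$ for all $k\ge 1$. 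Thus $Aut\mathcal{A}_p(+,G)$ is precisely the set of $A\in\Z_p^*$ with $(A-1)c=0$ and $A^{\,n_k-1}=1$ for all $k$, and the problem becomes: when does this set contain some $A\neq 1$?

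Next I would run the non-triviality analysis. If $c\neq 0$, then $(A-1)c=0$ forces $A=1$ since $\Z_p$ is an integral domain, so the group is trivial; hence $c=0$ is necessary, which is condition (1). If $\mathcal{N}_G=\emptyset$ there are no further constraints, $G$ reduces to $ax+by$, and every unit $A$ satisfies the identity, so the group is non-trivial exactly in the (affine-)linear form recorded for that case. If $\mathcal{N}_G\neq\emptyset$, non-triviality means there is a root of unity $A\neq 1$ in $\Z_p^*$ whose order divides $n_k-1$ for every $k$, equivalently divides $D=\gcd\{\,n_k-1 : k\ge 1\,\}$; writing $D=p^sn$ with $p\nmid n$ is what recasts the divisibilities $d\mid(n_k-1)$ of condition (2) and the arithmetic of condition (3).

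The crux, and where I expect the real work, is the torsion structure of $\Z_p^*$. For odd $p$ the Teichm\"{u}ller decomposition $\Z_p^*\cong\mu_{p-1}\times(1+p\Z_p)$ with $1+p\Z_p$ torsion-free shows the roots of unity form a cyclic group of order $p-1$; a nontrivial one of order dividing $D$ exists iff $\gcd(p-1,D)>1$, and since $\gcd(p-1,p^s)=1$ this equals $\gcd(n,p-1)$, giving condition (3) for $p\neq 2$. For $p=2$ the only roots of unity are $\pm 1$ (here $p-1=1$, so the odd-$p$ criterion can never hold and a separate argument is forced), the sole candidate is $A=-1$, and it works iff $2\mid(n_k-1)$ for all $k$, i.e. iff $D$ is even, which is the content of the clause $s=1$ (one factor of $p=2$ in the common divisor $d$). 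Finally I would verify the converse: for any $A$ of such admissible order the coefficient identities are satisfied, so $f(x)=Ax$ preserves $G$, and being a unit it is measure-preserving and $1$-Lipschitz by Theorem \ref{t_gom_ariff}, hence a genuine (bijective) automorphism; this makes the listed conditions sufficient as well as necessary. The main obstacle is the clean passage from ``$A^{\,n_k-1}=1$ for all $k$'' to a divisibility condition on the single integer $D$, together with correctly separating the $p$-part $p^s$ from the prime-to-$p$ part $n$ of $D$ in the two regimes $p$ odd and $p=2$.
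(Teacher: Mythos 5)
Your proposal is correct, and its skeleton is the same as the paper's: reduce via $Aut\mathcal{A}_p(+,G)=Aut\mathcal{A}_p(+)\cap Aut\mathcal{A}_p(G)$ and Theorem \ref{t_gom_ariff} to maps $f(x)=Ax$ with $A\in\Z_p^*$, translate $A\,G(x,y)=G(Ax,Ay)$ into the coefficient conditions $(A-1)c=0$ and $A^{n_k-1}=1$ for all $k$, and then decide when a solution $A\neq 1$ exists. But your route differs in the decisive final step, and the difference is to your advantage. The paper settles existence by counting solutions of $A^d=1$ (citing results in \cite{ANKH} and \cite{Katok}), and in doing so asserts that for $p=2$ and $s\ge 2$ the equation $A^d=1$ has only the solution $A=1$ in $\Z_2$; this is false, since $(-1)^d=1$ whenever $d$ is even, so $A=-1$ is always a second solution once $2\mid d$. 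Your argument instead goes through the torsion subgroup of $\Z_p^*$ (Teichm\"{u}ller decomposition $\mu_{p-1}\times(1+p\Z_p)$ for odd $p$, and $\{\pm 1\}$ for $p=2$) applied to $D=\gcd\{n_k-1\}$, which yields the correct criterion: nontriviality iff $\gcd(n,p-1)>1$ for odd $p$, and iff $D$ is even for $p=2$. This matches the stated clause ``$s=1$'' exactly under the reading you adopt --- condition (2) asserts the existence of \emph{some} common divisor $d=p^sn$ of all $n_k-1$ (for $p=2$ take $d=2$) --- rather than a statement about the $2$-adic valuation of the gcd itself, under which the proposition would actually be wrong (e.g.\ $G(x,y)=x^5+y^5$ over $\Z_2$ admits the automorphism $f(x)=-x$ although $\gcd\{n_k-1\}=4$). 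So your proof not only is complete (including the Strassmann justification for comparing coefficients, which the paper omits) but repairs a slip in the published argument. The only small gloss: in the case $\mathcal{N}_G=\emptyset$ you should state explicitly, as the paper does, that $a,b\neq 0$ is forced by the requirement that $G$ be a genuine binary operation.
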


\begin {proof} 
Let $f\in Aut\mathcal{A}_p(+, G)=Aut\mathcal{A}_p(+)\cap Aut\mathcal{A}_p(G).$ From Theorem \ref {t_gom_ariff} it follows that $f=Ax,$ $A\ne 0.$ A function $f$ defines a homomorphism with respect to the operation "$G$", i.e., $AG(x,y)=G(Ax,Ay).$ Let $\mathcal{N}_G\ne \emptyset$. Using the representation (\ref{op_new}), we get that  $A$ satisfies the system of equations 
$$
A^{n_1}=A,\;A^{n_2}=A,\;\ldots,\; A^{n_k}=A,\; \ldots
$$
or $A^{n_k-1}=1,$ $k\ge 1.$ This system of equations is equivalent to the equation $A^d=1.$ Let $x=0,$ $y=0,$ then $Ac=c$ and $c=0.$ It is easy to see that under the conditions of the proposition, the function $f(x)=Ax,$ $A^d=1$  defines a homomorphism with respect to the operation "$G$". Then the condition $Aut\mathcal{A}_p(+, G)\ne e$ is equivalent to the fact that the equation $A^d=1$ in $\Z_p$ has more than one solution.

Let $d=p^s\cdot n,$ $p\nmid n.$ If $p\nmid d,$ then the equation $A^d=1$ has $\text {GCD} (d,p-1)$ solutions in $\Z_p$ (see, for example, Theorem 3.24 in \cite {ANKH}). If $d=p^s,$ then the equation $A^{p^k}=1$ has a unique solution $A=1,$ except when $p=2$ and $k=1$ (in this case, the equation $A^2=1$ in $\Z_2$ has solutions $A=1,$ $A=-1;$ see, for example, Theorem 3.36 in \cite {Katok}). Clearly, for $d=p^sn,$ $p\nmid d$ the equation $A^d=1$ has $\text {GCD} (n,p-1)$ solutions in $\Z_p.$ 
Thus, if $p\ne 2,$ then the equation $A^d=1$ has exactly $\text {GCD} (n,p-1)$ solutions in $\Z_p.$ If $p=2,$ then for $s=0$ and $s\ge 2$ the equation $A^d=1$ has a unique solution $A=1$ in $\Z_2.$
If $s=1$ (that is, $d=2n$), then the equation $A^d=1$ has two solutions $A=\pm 1$ in $\Z_2.$

If $\mathcal{N}_G=\emptyset$, then $G(x,y)=c+ax+ay$ and $c=0.$  The function $f(x)=Ax$ defines the automorphism on $\mathcal{A}_p(+, G)$ for any $A\ne 0.$ Since $G$ is a binary operation by the initial condition, then $a, b\ne 0.$ 
\end {proof}

\begin {remark}
From Proposition \ref {new_operation} it follows that if the  group $Aut\mathcal{A}_p(+, G)\ne e$ ($G$ is a binary operation on $\Z_p$), then $Aut\mathcal{A}_p(+, G)$ is either finite and consists of $r\ne 1$ elements, where $r$ is a divisor of $p-1;$ or is infinite and $Aut\mathcal{A}_p(+,G)=Aut\mathcal{A}_p(+)\cong \Z_p^*$ (here $G(x,y)=ax+by,$ $a, b\ne 0$).
\end {remark}

\begin{example}
\label {example_operations}
Let us   present some examples of operations "$G$" for $p\ne 2,$ for which the $Aut\mathcal{A}_p(+, G)\ne e.$ 
$$
G(x,y)=xy^{p-1},\;G(x,y)=x^{p-1}y+xy^{p-1},\;G(x,y)=x^{\frac{p-1}{2}}\cdot y^{\frac{p+1}{2}}.
$$
For all these operations, the groups $Aut\mathcal{A}_p(+, G)$ consist of a $p-1$ elements.
\end{example}

Now let us  consider the case when the binary operations $g_1, g_2: \Z_p\times \Z_p \to \Z_p$ for the algebraic system $Aut\mathcal{A}_p(g_1, g_2)$ are given by formulas in the basis of operations $*_1, *_2\in O_{\Z_p}.$ That is $g_1, g_2$ are expressed through a pair of arithmetic or coordinate-wise logical operations.

By analogy with formulas of Boolean algebra, let us define formulas with respect to the operations $g_1$ and $g_2$ over $\Z_p:$
\begin{enumerate}
    \item  elements of $\Z_p,$ variables and operations $g_1, g_2$ are formulas;
    \item if $F_1$ and $F_2$ are formulas, then $g_1(F_1, F_2)$ and $g_2(F_1, F_2)$ are formulas.    
\end{enumerate}

We denote the set of all formulas defined with respect to operations $g_1$ and $g_2$ as $[g_1, g_2].$  The following assertion holds.

\begin {proposition}
\label {formuls}

Let operations $g_1,\;g_2:\Z_p\times \Z_p \to \Z_p$ be defined by the formulas from $[*_1, *_2],$ $*_1, *_2\in O_{\Z_p}$ (arithmetic and coordinate-wise logical operations) and 
$Aut \mathcal{A}_p(g_1, g_2)\ne e.$ 

Then $[g_1, g_2]\subset [*_1, *_2]$ and $[g_1, g_2]\ne [*_1, *_2].$ 
\end {proposition}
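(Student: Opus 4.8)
The plan is to prove the two assertions separately, exploiting the principle that the automorphism group of an algebraic system is governed by the whole set of operations derivable from its basic operations, and then to reduce the equality case to the already-established Theorem \ref{Prop_aut_2}. Throughout I read the hypothesis as $g_1, g_2 \in [*_1, *_2]$, i.e.\ the new operations are already formulas over the old basis, and I treat each formula simultaneously as a syntactic object and as the operation $\Z_p^k \to \Z_p$ it induces.

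The inclusion $[g_1, g_2] \subseteq [*_1, *_2]$ I would obtain by a routine structural induction on formulas. The atoms (elements of $\Z_p$ and variables) already lie in $[*_1, *_2]$; and if $F_1, F_2 \in [*_1, *_2]$, then, writing $g_i$ as a fixed formula $H_i \in [*_1, *_2]$, the formula $g_i(F_1, F_2)$ coincides with the composite $H_i(F_1, F_2)$, which is again a formula over $*_1, *_2$. Hence every formula built from $g_1, g_2$ unfolds into one built from $*_1, *_2$. This is the easy half.

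For the strict inequality I would argue by contraposition: assume $[g_1, g_2] = [*_1, *_2]$ and deduce $Aut\mathcal{A}_p(g_1, g_2) = e$. The engine is the elementary lemma that a bijection $f$ which is a homomorphism with respect to both $g_1$ and $g_2$ is automatically a homomorphism with respect to every operation represented by a formula in $[g_1, g_2]$; this follows by structural induction, the base case being the trivial identity for a projection $F(x_1,\ldots,x_k)=x_i$ and the inductive step being precisely the defining homomorphism identity for $g_1$ or $g_2$ applied to the sub-formulas. Under the assumed equality we have $*_1, *_2 \in [g_1, g_2]$, so any $f \in Aut\mathcal{A}_p(g_1, g_2)$ is then a homomorphism with respect to $*_1$ and $*_2$ as well. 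Since $f$ is bijective and preserves all congruences modulo $p^k$ (being 1-Lipschitz and measure-preserving), it lies in $Aut\mathcal{A}_p(*_1, *_2)$, which is trivial by Theorem \ref{Prop_aut_2}; thus $f = e$ and $Aut\mathcal{A}_p(g_1, g_2) = e$, contradicting the hypothesis $Aut\mathcal{A}_p(g_1, g_2) \neq e$.

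The delicate point — and the step I expect to cost the most — is the role of the constants that a formula may contain: an element of $Aut\mathcal{A}_p(g_1, g_2)$ respects $g_1, g_2$ but need not fix an arbitrary $c \in \Z_p$, so the induction above transfers cleanly only across constant-free formulas (the problematic base case is exactly $F = c$, where $f(c)$ need not equal $c$). To close this gap I would show that the constants actually occurring in the formulas expressing $*_1, *_2$ through $g_1, g_2$ are among the elements fixed by every automorphism — in particular $0$ and the neutral/identity elements of the operations involved, which, as in the Boolean-algebra analogy motivating the definition of formula, are forced to be fixed by any bijective homomorphism. Verifying that this suffices, so that preservation of $*_1, *_2$ genuinely follows, for each admissible pair $(*_1, *_2) \in O_{\Z_p} \times O_{\Z_p}$ (a finite case analysis matching the six groups of Theorem \ref{Prop_aut_2}) is where the substance of the argument is concentrated.
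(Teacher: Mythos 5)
Your proof is, in its main line, the same as the paper's. The paper also obtains the inclusion $[g_1,g_2]\subset[*_1,*_2]$ directly from $g_1,g_2\in[*_1,*_2]$, and for strictness it likewise assumes $[g_1,g_2]=[*_1,*_2]$, writes $*_1,*_2$ as formulas $\Psi_1,\Psi_2$ over $g_1,g_2$, transfers the homomorphism property through the formulas via $f(a*_ib)=f(\Psi_i(a,b))=\Psi_i(f(a),f(b))=f(a)*_if(b)$, and then applies Theorem \ref{Prop_aut_2} to conclude $f=e$, contradicting $Aut\mathcal{A}_p(g_1,g_2)\ne e$. Your contrapositive formulation is that identical argument.

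The only divergence is the ``delicate point'' about constants, and there you are more careful than the paper, not less. Since elements of $\Z_p$ are admitted as atomic formulas, the structural induction yields only $f(\Psi_i(a,b))=\Psi_i^{f}(f(a),f(b))$, where $\Psi_i^{f}$ denotes $\Psi_i$ with each constant $c$ replaced by $f(c)$; the equality $\Psi_i^{f}=\Psi_i$, which the paper uses without comment, requires the constants occurring in $\Psi_i$ to be fixed by $f$. So the issue you flag is a genuine one, and it is present (unacknowledged) in the paper's own proof. On the other hand, your proposed repair remains a sketch, and it cannot rest on the Boolean-algebra analogy alone: the constants in $\Psi_i$ need not be neutral elements of anything, so their fixedness has to be extracted from the homomorphism equations themselves. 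For instance, with $p=2$, $g_1=\mathrm{XOR}$ and $g_2(x,y)=(x\,\mathrm{AND}\,y)\,\mathrm{XOR}\,c$, the operation $\mathrm{AND}$ is recovered only through the constant $c$, and showing $f(c)=c$ for every $f\in Aut\mathcal{A}_2(g_1,g_2)$ takes a short computation (substitute $y=-1$ and use bijectivity of $f$ together with the $\mathrm{XOR}$-homomorphism property), not an appeal to identity elements. Judged as a finished proof, yours therefore stops at exactly the step you name --- but that is the same step at which the paper's proof is silent; both arguments become complete verbatim if ``formula'' is restricted to constant-free terms, and both need the case analysis you describe if constants are allowed.
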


\begin {proof}
Since $g_1, g_2 \in [*_1, *_2],$ then $[g_1, g_2]\subset [*_1, *_2].$ Assume that $[g_1, g_2]=[*_1, *_2],$ $*_1, *_2 \in O_{\Z_p}=\{+,\;\cdot, \mathrm{XOR}, \mathrm{AND}\}.$ Then "$*_1$" and "$*_2$" are defined by the formulas $\Psi_1(x_1,x_2)$ and $\Psi_2(x_1,x_2)$ with respect to the operations $g_1$ and $g_2.$ Let $f\in Aut\mathcal{A}_p(g_1, g_2).$ Since $f$ is a homomorphism with respect to $g_1$ and $g_2,$ then
$$
f(a*_ib)=f(\Psi_i(a,b))=\Psi_i(f(a),f(b))=f(a)*_if(b),\;i=1, 2,
$$
i.e. $f$ is the homomorphism with respect to "$*_1$" and "$*_2$". Then from Theorem \ref {Prop_aut_2}
it follows that $f$ is an identity mapping, i.e. $Aut\mathcal{A}_p(g_1, g_2)=e.$ This contradicts with the condition of the Proposition.
\end {proof}


\section{Automorphisms of $p$-adic integers and fully homomorphic ciphers}
\label {sec_model}

As we have already noted, the motivation for studying groups of automorphisms  of  $p$-adic integers is the problem of the existence of fully homomorphic ciphers. The connection between these problems is explained by the fact that for a wide family of ciphers, one can construct their "continuous" $p$-adic model. 
In this model, the ciphers are described by the algebraic system $\langle \Z_p, *_1, *_2, P_{\Z_p} \rangle,$ for which the family of predicates $P_{\Z_p}$ determines congruence relations modulo $p^k,$ $k\ge 1,$  operations "$*_1$", "$*_2$" will be selected from the set $O_{\Z_p}=\{+, \cdot,\; \mathrm{XOR}, \mathrm{AND}\}.$

In this section, we describe such family of ciphers and their "continuous" $p$-adic model.

The general idea of a fully homomorphic encryption is as follows (see, for example, \cite {Obz_appl_2} and \cite {Obz_appl_1}). Suppose we have a  set of data $M$. The operations 
$g_1: M\times M \to M,$ $g_2: M\times M \to M$ are defined on the set $M$. It is necessary to find the value of an expression $W(d_1, \ldots, d_n)$, which is defined through the operations $g_1$ and $g_2$ on the data  $d_1, \ldots, d_n\in M.$ 

By analogy with the formulas of Boolean algebra, the expression $W$ can be considered as a formula on the basis $g_1$ and $g_2.$ If the calculation of the formula $W$ is performed remotely (for example, using cloud services), then the user sends the data $d_1, \ldots, d_n$ to an untrusted environment (for example, to the cloud server). After that, the calculation result returns to the user. In this case, the user's data become open. 

We understand a cipher as a family of bijective transformations $f_a$ of the set $M,$ where each transformation is identified by a certain parameter $a$ -- the encryption key. Suppose that $f_a$ is a homomorphism with respect to the operations $g_1$ and $g_2.$ Then, $f_a(W(d_1, \ldots, d_n))=W(f_a(d_1), \ldots, f_a(d_n)).$ This means that the remote computations are performed on encrypted data $f_a(d_1), \ldots, f_a(d_n)$ and the result of calculations $W(d_1, \ldots, d_n)$ is obtained in encrypted form $f_a(W).$ That is, only the user has access to the data $d_1, \ldots, d_n.$ In general, this approach provides complete trust in remote computing.

Next, we give a description of the family of ciphers, for which we will consider their "continuous" $p$-adic model.

Let us remind that a cipher is a set $\langle X, R, Y, h_r, r\in R\rangle,$ where $X$ is a set of plain texts, $Y$ is a set of cipher-texts, $R$ is a set of keys, encryption functions $h_r$ are defined by the parameter $r\in R$ and define an injective  map $X\to Y.$ Here we assume that all maps $h_r$ are surjective.

A family of ciphers $\mathcal{C}_p=\langle X, R, Y, h_r, r\in R\rangle$ we set in the following way:
\begin{enumerate}[a)]
\item Let $X=Y=X^{(\infty)}$ be a set of all words (as a sequence of finite length) in the alphabet $\mathcal{B}=\{0, 1, \ldots, p-1\}$ for prime number $p$ (if we denote as $X^{(k)}$ a set of all words of the length $k$ in the alphabet $\mathcal{B},$ $k\ge 1,$ then $X^{(\infty)}=\cup_{k=1}^{\infty}X^{(k)}$).
\item Functions $h_r : X^{(\infty)}\to X^{(\infty)},$ $r\in R$ satisfy the following conditions: 
\begin{multline}
1.\; h_r : X^{(k)}\to X^{(k)},\;h_r \;\; \text {are bijective on}\;\; X^{(k)} for k\ge 1.
\end{multline}
\begin{multline}
\label {line 3.2}
2.\;\;\; \text {if}\;\; \{x_1,\ldots,x_s,\ldots, x_{k}\}\stackrel{h_r}{\longmapsto} \{y_1,\ldots,y_s, \ldots, y_{k}\} \;\;\text{then}\\
 \{x_1, \ldots, x_s\} \stackrel{h_r}{\longmapsto} \{y_1,\ldots,y_s\} \;\; \text {for any} \;\; 1\le s\le k.
\end{multline}
\end{enumerate}
For ciphers from the family $\mathcal{C}_p,$ we define operations on the set $X^{(\infty)}.$  Let $x, y\in X^{(k)},$  and $\tau_k: X^{(k)}\to \{0, 1, \ldots, p^k-1\},$ $k\ge 1,$
$$
\tau_k(x)=\tau_k(\{x_1, \ldots, x_{k}\})=x_1+x_2p+\ldots+x_{k}p^{k-1}.
$$
The following operations are defined on the set $X^{(k)}:$
\begin{align*}
x+_ky=&\tau_k^{-1}\left(\tau_k(x)+\tau_k(y) \pmod {p^k}\right);\\ x\cdot_ky=&\tau_k^{-1}\left(\tau_k(x)\cdot\tau_k(y) \pmod {p^k}\right);\\
x\mathrm{XOR}_ky=&\tau_k^{-1}\left(\tau_k(x)\mathrm{XOR}\tau_k(y) \pmod {p^k}\right);\\
x\mathrm{AND}_ky=&\tau_k^{-1}\left(\tau_k(x)\mathrm{AND}\tau_k(y) \pmod {p^k}\right).
\end{align*}
The set of such operations, we denote as $\overline {O}_p.$

Note that, the family $\mathcal{C}_p$ contains substitution ciphers, substitution ciphers streaming, keystream ciphers (in the alphabet of $p$ elements). On the other hand, there are no ciphers in $\mathcal{C}_p$ with different parameters of the sets of plain-text and cipher-text (for example, when the number of elements in the alphabet is a composite integer).   

As a "continuous" $p$-adic model of ciphers from the family $\mathcal{C}_p$ with given operations from the set $\overline {O}_p,$ we consider the algebraic system $\langle \mathcal{A}, \Omega_{\mathcal{A}}, P_{\mathcal{A}} \rangle,$ where:
\begin{enumerate}
    \item an algebraic system carrier is $\mathcal{A}=\Z_p;$
    \item a family of predicates $P_{\mathcal{A}}$ determines the congruence relation modulo $p^k,$ $k\ge 1;$
    \item as operations from $\Omega_{\mathcal{A}},$ we consider any pair of operations from the set $O_{\Z_p}$ (arithmetic and coordinate-wise logical operations on $\Z_p$);
    \item the automorphisms of the algebraic system $\langle \mathcal{A}, \Omega_{\mathcal{A}}, P_{\mathcal{A}} \rangle$ correspond to the transformations of the open and ciphered texts ($h_r$) for ciphers from $\mathcal{C}_p.$
\end{enumerate}

Taking into account the previously used notation, such algebraic systems we denote $\mathcal{A}_p(*_1, *_2),$ $*_1, *_2 \in O_{\Z_p}.$ The choice of such a model is determined by the following circumstances:

1. Let $x=\{x_1, \ldots, x_{k}\}\in X^{(k)},$ $k\ge 1$ 
(this is an element from the set of plain- and cipher-texts for the ciphers from $\mathcal{C}_p$). An element $x$ we associate with the element $\tau_k(x)\in \Z/p^{k}\Z.$ Then the set $\cup_{k\ge 1}X^{(k)}$ we, naturally, associate with the projective limit of residue rings $\Z/p^{k}\Z$ with respect to the natural projections $\Z/p^{k+1}\Z \to \Z/p^{k}\Z.$ Since $\varprojlim\Z/p^{k}\Z=\Z_p,$ then the set $\cup_{k\ge 1}X^{(k)}$ has been associated with the ring of  $p$-adic integers $\Z_p.$

2. Let $x=\{x_1, \ldots, x_{k}\}\stackrel{h_r}{\longmapsto} \{y_1, \ldots, y_k\}=y$ and $f_r^{(k)}: \Z/p^k\Z \to \Z/p^k\Z,$ $f_r^{(k)}(\tau_k(x))=\tau_k(y),$ $k\ge 1.$

Taking into account the condition (\ref {line 3.2}) for $h_r$, we obtain that $f_r^{(k)}$ define a 1-Lipschitz function $f_r:\Z_p \to \Z_p$ such that $f_r\equiv f_r^{(k)} \pmod {p^k},$ $k\ge 1$ (in particular, $f_r$ retains all congruence relations modulo $p^k,$ $k\ge 1$).

From the bijectivity of $h_r$ on $X^{(k)},$ $k\ge 1$ (the third property for $h_r$) and the method of determining $f_r,$ it follows that  $f_r^{(k)}$ (considering (\ref {_x})) are bijective on $\Z/p^k\Z$ for $k\ge 1.$ By Theorem \ref {cr_mer_Anashin} the functions $f_r$ preserve the measure. As we have already noted, the property of a measure-preservation means that $f_r$ is bijective on $\Z_p.$

3. It is clear, that operations from $\overline {O}_p$ can be extended by continuity on $\Z_p,$ and these extensions correspond to the arithmetic and coordinate-wise logical operations on $\Z_p$ (i.e. operations from $\overline {O}_p$ can be extended to $O_{\Z_p}$ by the continuity). 

If encoding transformations of ciphers from $\mathcal{C}_p$ define fully homomorphic ciphers with respect to any pair of operations $\overline {*}_1, \overline {*}_2\in \overline {O}_p,$ then these transformations correspond automorphisms  $f_r$ of the algebraic system $\mathcal{A}_p(*_1, *_2),$ here $*_1,\; *_2$ are operations on $\Z_p,$ which correspond to operations $\overline {*}_1, \overline {*}_2,$ given on sets of plain- and cipher-texts for ciphers from $\mathcal{C}_p.$

Let us give examples of a representation of ciphers from  $\mathcal{C}_p$ within our model.

\begin{example}
The symmetric permutation group on $\mathcal{B}=\{0, 1, \ldots, p-1\}$ we denote by $S_p$ ($\mathcal{B}$ is the alphabet of plain- and cipher-texts of ciphers from $\mathcal{C}_p$).
Let $x=\{x_1, \ldots, x_k, \ldots\}\in X^{(\infty)}.$ The action of permuting $g\in S_p$ on an element $\alpha\in \mathcal{B}$ we denoted by $\alpha^g.$
In a $p$-adic model, the encryption function $h_r$ is modelled by a 1-Lipschitz function:  
\begin{align*}
 f_r(x)=&\sum_{k=0}^{\infty}p^kx_k^{g_k} \;\;\; \text {for substitution ciphers streaming};\\
 f_r(z)=&\sum_{k=0}^{\infty}p^k(x_k\oplus_p\gamma_k) \;\;\;
 \text {for keystream ciphers} ;\\
 f_r(z)=&\sum_{k=0}^{\infty}p^kx_k^{g} \;\;\;\;\;\;\;\;\;\;\;\;\;\;\; \text {for substitution ciphers}.
 \end{align*}

\end{example}

In conclusion, we note that the results of Theorem \ref {Prop_aut_2} show that  there are no fully homomorphic ciphers with respect to each pair of operations from $\overline {O}_p$ in the family of ciphers $\mathcal{C}_p.$
On the other hand, Propositions \ref {new_operation} and \ref {formuls} show that there is a potential possibility for the existence of fully homomorphic ciphers with respect to "new" operations. However, in this case, the possibilities of computations in the basis of "new" operations are limited (in comparison with calculations on the basis of arithmetic and coordinate-wise logical operations).


\end{document}